\def\namedlabel#1#2{\begingroup
#2%
\def\@currentlabel{#2}%
\phantomsection\label{#1}\endgroup
}
\theoremstyle{theorem} 
\newtheorem{theorem}{Theorem}[section]
\newtheorem{lemma}[theorem]{Lemma}
\newtheorem{proposition}[theorem]{Proposition}
\newtheorem{theoremx}{Theorem}
\theoremstyle{definition} 
\newtheorem{definition}[theorem]{Definition}
\newtheorem{remark}[theorem]{Remark}
\numberwithin{equation}{subsection}
\newtheorem{notation}[theorem]{Notation}
\definecolor{blue-violet}{rgb}{0.54, 0.17, 0.89}
\definecolor{Blue}{rgb}{0.01, 0.28, 1.0}
\definecolor{gGreen}{rgb}{0.2, 0.8, 0.2}
\definecolor{Green}{rgb}{0.04, 0.85, 0.32}
\def\@tocline#1#2#3#4#5#6#7{\relax
  \ifnum #1>\c@tocdepth 
  \else
    \par \addpenalty\@secpenalty\addvspace{#2}%
    \begingroup \hyphenpenalty\@M
    \@ifempty{#4}{%
      \@tempdima\csname r@tocindent\number#1\endcsname\relax
    }{%
      \@tempdima#4\relax
    }%
    \parindent\z@ \leftskip#3\relax \advance\leftskip\@tempdima\relax
    \rightskip\@pnumwidth plus4em \parfillskip-\@pnumwidth
    #5\leavevmode\hskip-\@tempdima
      \ifcase #1
       \or\or \hskip 1.9em \or \hskip 2em \else \hskip 3em \fi%
      #6\nobreak\relax
    \dotfill\hbox to\@pnumwidth{\@tocpagenum{#7}}\par
    \nobreak
    \endgroup
  \fi}
\newcommand{\NN}{\mathbb{N}}
\newcommand{\RR}{\mathbb{R}}
\newcommand{\ZZ}{\mathbb{Z}}
\newcommand{\QQ}{\mathbb{Q}}
\newcommand{\KK}{\mathbb{K}}
\newcommand{\CC}{\mathbb{C}}
\newcommand{\m}{\mathfrak{m}}
\newcommand{\Spec}{\operatorname{Spec}}
\newcommand{\ch}{\operatorname{char}}
\newcommand{\Nash}{\operatorname{Nash}}
\newcommand{\Hilb}{\operatorname{Hilb}}
\newcommand{\Oxx}{{\mathcal O}_{X,x}}
\newcommand{\Su}{\KK[x^{a_1},\ldots,x^{a_s}]}
\newcommand{\Po}{\KK[x_1,\ldots,x_d]}
\newcommand{\Poy}{\KK[y_1,\ldots,y_s]}
\newcommand{\Poz}{\ZZ_p[y_1,\ldots,y_s]}
\newcommand{\Na}{\operatorname{Nash}_n(X)}
\newcommand{\Nn}{\overline{\Nash_n(X)}}
\newcommand{\V}{\mathbf{V}}
\newcommand{\GrFan}{\operatorname{GF}}
\newcommand{\GF}{\GrFan(J_n)}
\newcommand{\sd}{\check{\sigma}}
\newcommand{\Si}{\mbox{Sing}}
\newcommand{\sz}{\sigma_{\ZZ}}
\newcommand{\Suc}{\CC[u,u^3v^4,uv]}
\newcommand{\Suk}{\KK[u,u^3v^4,uv]}
\newcommand{\Suz}{\ZZ[u,u^3v^4,uv]}
\newcommand{\Jn}{\langle u-1,u^3v^4-1,uv-1 \rangle^{n+1}}
\newcommand{\Gn}{\mathbb{G}_n}
\newcommand{\Pnn}{\mathcal{P}_n}
\newcommand{\Pnu}{\mathcal{P}_{n-1}}
\newcommand{\Cc}{C_{\mathbb{G}_n^{(0)}}}
\newcommand{\Cp}{C_{\mathbb{G}_n^{(p)}}}
\newcommand{\lm}{\operatorname{lm}}
\newcommand{\lc}{\operatorname{lc}}
\newcommand{\lt}{\operatorname{lt}}
\newcommand{\init}{\operatorname{in}}
\newcommand{\supp}{\operatorname{supp}}
\newcommand{\pol}{\operatorname{pol}}
\newcommand{\tor}{\mathbb{T}}
\begin{document}

\title[Higher Nash blowups of normal toric varieties]{Higher Nash blowups of normal toric varieties in prime characteristic}

\author[D. Duarte]{Daniel Duarte$^1$}
\address{CONACyT-Universidad Aut\'onoma de Zacatecas, Zacatecas, Zac., M\'exico}
\email{aduarte@uaz.edu.mx}

\author[L. N\'u\~nez-Betancourt]{Luis N\'u\~nez-Betancourt${^2}$}
\address{Centro de Investigaci\'on en Matem\'aticas, Guanajuato, Gto., M\'exico}
\email{luisnub@cimat.mx}

\thanks{{$^1$}Partially supported by CONACyT grant 287622.}

\thanks{{$^2$}Partially supported by CONACyT grant 284598 and C\'atedras Marcos Moshinsky.}

\subjclass[2010]{Primary 14B05; Secondary 14E15, 14M25.}
\keywords{Higher Nash blowups, normal toric varieties, $A_3$-singularity}

\maketitle
\setcounter{tocdepth}{1}

\begin{abstract}
We prove that the higher Nash blowup of a normal toric variety defined over a field of positive characteristic is an isomorphism if and only if it is non-singular. We also extend a result by R. Toh-Yama which shows that higher Nash blowups do not give a one-step resolution of certain toric surface.
These results were previously  known only in characteristic zero. 
\end{abstract}


\section{Introduction}

The Nash blowup of order $n$ is a modification of an algebraic variety that replaces singular points by limits of infinitesimal neighborhoods of order $n$ of non-singular points. The main goal for this generalization of the usual Nash blowup was to investigate whether this modification would give a one-step resolution of singularities for $n\gg 0$ \cite{Yas1}. The first result obtained on this question was an affirmative answer for complex curves \cite{Yas1}.

A classical result in the theory of Nash blowups over fields of characteristic zero, due to A. Nobile, states that the Nash blowup of order one is an isomorphism if and only if the variety is non-singular \cite{Nob}. There are generalizations of this result for the higher Nash blowup in the case of normal toric varieties, normal hypersurfaces and toric curves \cite{DuarteToric,DuarteHyp,ChDG}. In contrast, it is also known that Nobile's Theorem over fields of positive characteristic fails in general: there are singular curves whose Nash blowup of any order is an isomorphism (for $n=1$ the counterexample is due to A. Nobile \cite{Nob} and for $n\geq1$, to T. Yasuda \cite{Yas1}). However, it was recently proven that Nobile's Theorem is true for normal varieties in prime characteristic \cite{DuarteNB}. From this result one may wonder if  other results only known in characteristic zero can be also obtained in prime characteristic.  In this manuscript, in particular, we are interested in results concerning toric varieties.
Our first main result states that a higher version of Nobile's Theorem holds over fields of prime characteristic for normal toric varieties.

\begin{theoremx}[{see Theorem  \ref{t. analogue Nobile}}]\label{nob toric}
Let $X$ be a normal toric variety over an algebraically closed  field $\KK$ of positive characteristic. If $\Nash_n(X)\cong X$ for some $n\geq1$, then $X$ is a non-singular variety.
\end{theoremx}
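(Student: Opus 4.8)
The plan is to reduce the statement to a combinatorial fact about cones and semigroups, following the strategy used for the $n=1$ normal case (Nobile's theorem in prime characteristic, \cite{DuarteNB}) and for the characteristic-zero higher version (\cite{DuarteToric}). First I would recall the description of the higher Nash blowup of an affine toric variety $X = \Spec \KK[\sigma^\vee \cap M]$ in terms of the Gauss-type map attached to the module of higher-order differentials / principal parts $\mathcal{P}^n_X$: on the torus, the $n$-th Nash blowup is the closure of the graph of the map sending a smooth point to the fiber of $\mathcal{P}^n_X$, and by torus-equivariance this fiber is determined on each orbit by a finite set of Laurent monomials, namely the semigroup elements of ``order $\le n$'' coming from a chosen system of generators of $\sigma^\vee \cap M$. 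Concretely, for each maximal cone one gets a point of a Grassmannian / a collection of exponent vectors, and $\Nash_n(X)$ is the toric variety associated to the subdivision (or the \emph{Gauss fan} $\GrFan(J_n)$ in the paper's notation) generated by these data. The key reduction is: $\Nash_n(X) \cong X$ as toric varieties iff this subdivision is trivial iff a certain finite set of lattice vectors, built from the $(n+1)$-st power of the ideal $J = (x^{a_1}-1,\dots)$, spans exactly $\sigma$ again.

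The second step is to show that this combinatorial condition forces $\sigma$ to be a regular (smooth) cone. Here I would argue by contradiction: assume $X$ is singular, so some maximal cone $\sigma$ is not generated by part of a $\ZZ$-basis of $M$. The heart of the matter is that the $n$-th order data — the lattice points appearing in the normal fan description of $\Nash_n(X)$, which in characteristic zero come from the ``Nash multiplicity'' vectors $\min\{\langle a, u\rangle : a \in \text{gens}, \ \langle a,u\rangle \le n\}$-type quantities — must be insensitive to the characteristic. This is exactly the place where positive characteristic could, a priori, break things: the module of principal parts $\mathcal{P}^n_{R/\KK}$ is \emph{not} the same as its characteristic-zero analogue (divided powers vs. ordinary powers), so the fiber of the Nash blowup at a torus point is governed by the $p$-adic structure of the semigroup rather than by the naive one. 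The main obstacle, therefore, is to prove that the relevant generating set of lattice vectors — the ``$p$-version'' $\mathbb{G}_n^p$ in contrast to $\mathbb{G}_n^{(0)}$ — still differs from a regular cone whenever $\sigma$ does, i.e. that passing to characteristic $p$ does not accidentally simplify the subdivision to the trivial one.

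To handle that obstacle I would isolate, for each non-smooth maximal cone $\sigma$, a single lattice point $m \in \sigma^\vee \cap M$ and a single ray $u$ of $\sigma$ witnessing non-smoothness (so that $\langle a_i, u\rangle$ for the chosen generators are not all $0$ or $1$, or more precisely so that $m$ fails to be a nonnegative combination of a basis), and then exhibit an explicit Laurent monomial of order $\le n$ in the characteristic-$p$ sense whose exponent vector lies outside $\sigma$; producing such a monomial amounts to choosing an appropriate product of the binomials $x^{a_i}-1$ and reading off a monomial with a nonzero (mod $p$) coefficient, which one can always do by taking, say, the leading monomial with respect to a suitable monomial order and checking that the multinomial coefficient is a unit — this is the computation one expects to do carefully but which is essentially bookkeeping once the order is chosen to be a $p$-th power or to avoid the relevant prime. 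This gives a nontrivial element of the normal fan of $\Nash_n(X)$ strictly refining $\sigma$, contradicting $\Nash_n(X)\cong X$.

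Finally I would assemble these local statements: since $\Nash_n$ commutes with the toric open cover by maximal cones and an isomorphism of toric varieties over the same torus is an isomorphism of fans, $\Nash_n(X)\cong X$ forces every maximal cone of the fan of $X$ to be smooth, hence $X$ is non-singular, which is Theorem \ref{t. analogue Nobile}. The logical skeleton is identical to the characteristic-zero proof of \cite{DuarteToric}; the only genuinely new input is the characteristic-$p$ analysis of $\mathcal{P}^n_X$, for which the recently established normal case $n=1$ (\cite{DuarteNB}) provides both the template and, I expect, a usable black box for the structure of the semigroup data in mixed/positive characteristic.
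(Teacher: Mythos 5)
Your first, second, and last paragraphs track the paper's actual strategy closely: the normalized $n$-th Nash blowup is described by the Gr\"obner fan of $J_n=\langle x^{a_1}-1,\dots,x^{a_s}-1\rangle^{n+1}$ (Theorem \ref{t. Nash = GF}, proved in arbitrary characteristic), and showing $\Nash_n(X)\not\cong X$ reduces to showing this fan is a non-trivial subdivision of $\sigma$, i.e.\ to exhibiting $g$ in the reduced Gr\"obner basis and $w,w'\in\sigma$ with $\init_w(g)\neq\init_{w'}(g)$. The reduction and the gluing over the affine charts are correctly identified. Also note that the paper never invokes $\mathcal{P}^n_X$ or divided powers directly; the characteristic dependence shows up entirely as ``integer coefficients that may vanish mod $p$'' inside the Gr\"obner-basis computation, which you do sense is where the issue lies.

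The gap is in your third paragraph, the heart of the matter (the paper's Lemma \ref{l. h in J_n}). Your proposed mechanism --- take an appropriate product of the binomials $x^{a_i}-1$, read off a leading monomial, and check a multinomial coefficient is a unit, arranging this by choosing the monomial order to be a $p$-th power or to avoid $p$ --- does not address the actual obstruction, and the monomial order plays no role in it. What the paper does instead is start from an integer relation $\lambda_1 a_1+\dots+\lambda_t a_t=\lambda_{t+1}a_{t+1}+\dots+\lambda_{d+1}a_{d+1}$ (coming from $s>d$), form the linear part $\tilde h=\sum\delta_i(y_i-1)$ of the corresponding binomial $y^{\lambda}-y^{\mu}$ around $(1,\dots,1)$, and push it to an element $h\in J_1$ with $\lt_{>_w}(h)=x^{a_i}$ for some ray $a_i$. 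The danger is that \emph{all} $\lambda_i$ are divisible by $p$, in which case every $\delta_i\equiv 0$ and $\tilde h=0$; no choice of monomial order helps here. The paper disposes of this by factoring out the minimal $p$-power from the $\lambda_i$'s and, in the delicate boundary case $r=d$ with $p\mid\lambda_1,\dots,\lambda_d$ and $p\nmid\lambda_{d+1}$, deriving a contradiction from the primitivity of $a_{d+1}$ (case (2.2)). This $p$-adic reduction and primitivity argument is a genuine new idea in prime characteristic and is what your sketch is missing. You also omit the companion Lemma \ref{l. p<n+1} ($(x^{a_i}-1)^m\notin J_n$ for $m<n+1$), which is needed to turn the element $h$ from Lemma \ref{l. h in J_n} into an actual non-trivial refinement of $\sigma$ and whose proof in characteristic $p$ again uses primitivity of $a_i$.
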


Theorem \ref{nob toric} suggests that the question regarding one-step resolution via higher Nash blowups can be reconsidered in arbitrary characteristic for normal toric varieties. R. Toh-Yama \cite{Toh} settles this question over $\CC$ giving an example of a toric surface whose every higher Nash blowup is singular. We extend this result to prime characteristic. 

\begin{theoremx}[{see Theorem \ref{ThmNotResA3}}]\label{nash A3}
Let $\KK$ be an algebraically closed field of positive characteristic.
Let $X=\V(xy-z^4)\subseteq\KK^3$. Then, $\Nash_n(X)$ is singular for all $n$.
\end{theoremx}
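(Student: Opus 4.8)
The plan is to apply the combinatorial description of the higher Nash blowup of a normal affine toric variety developed above. Write $X=\V(xy-z^4)\cong\Spec\Suk$ and let $e=(1,1)$ be the identity of its torus $\Spec\KK[u^{\pm1},v^{\pm1}]$; this is a smooth point of $X$ lying in the torus, so by torus-equivariance the whole construction is determined by the single jet at $e$. In the coordinates $x=u,\ y=u^3v^4,\ z=uv$ the point $e$ is $(1,1,1)$, so $\m_e=(u-1,\,u^3v^4-1,\,uv-1)$ and $J_n=\m_e^{n+1}=\Jn$ is the ideal governing the order-$n$ Nash blowup at $e$. By the structural result recalled above, $\Nash_n(X)\cong\XGF$, the toric variety attached to the Gröbner fan $\GF$ of $J_n$ over $\KK$, a subdivision of the two-dimensional singular cone $\sigma$ of $X$. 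Since a non-singular variety is normal, $\Nash_n(X)$ can be non-singular only if $\XGF$ is; as $\XGF$ is a toric surface, it therefore suffices to exhibit, for each $n\ge1$ and each prime $p=\ch\KK$, a two-dimensional cone $\tau$ of $\GF$ generated by primitive vectors $v,w$ with $|\det(v,w)|\ge2$.

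The first step is to make the characteristic-$p$ Gröbner data of $J_n$ explicit. One has $J_n=(u-1,v-1)^{n+1}$ in every characteristic (indeed $uv-1\equiv v-1$ and $u^3v^4-1\equiv0$ modulo $u-1$), so in the shifted variables this is the monomial ideal $(x,y)^{n+1}$; but the Gröbner fan is taken in the $(u,v)$-weight space, where the shift $x=u-1,\,y=v-1$ is not monomial, so the fan is genuinely nontrivial and --- crucially --- characteristic-dependent, because the initial forms of the generators $(u-1)^a(v-1)^b$ along the walls of the fan involve binomial coefficients, some of which vanish modulo $p$ (for instance $(u-1)^p=u^p-1$). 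Over $\QQ$ this computation recovers Toh-Yama's marked configuration $\Gc$ and fan $\Cc$; over a field of characteristic $p$ it produces $\Gp$ and $\Cp$ instead, which I would determine by running the Gröbner computation over $\ZZ$ and reducing modulo $p$, recording which leading terms survive. I expect the answer to depend on $n$ essentially through the base-$p$ expansion of $n+1$.

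With $\Gp$ in hand the argument splits into two regimes. When $p$ is large relative to $n$ --- concretely, once $p$ exceeds a bound depending on $n$ that guarantees no binomial coefficient entering the construction vanishes modulo $p$ --- one has $\Gp=\Gc$, hence $\Cp=\Cc$, and the required singular cone is supplied verbatim by Toh-Yama's characteristic-zero analysis. When $p$ is small one argues directly with $\Cp$, showing that the subdivision $\Cp$ of $\sigma$ always contains a two-dimensional cone of multiplicity at least two. In practice I would isolate one explicit maximal cone $\tau\in\Cp$, compute $\init_\tau(J_n)$, read off its ray generators from the vertices of $\Co(\Gp)$ cut out by $\tau$, and verify $|\det|\ge2$ --- a finite computation once those vertices are known. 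Either way this exhibits a singular point of $\XGF$, hence of $\Nash_n(X)$, for all $n$ and all $p$, which is the assertion.

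The crux is the middle step: controlling $\Gp$, and hence $\Cp$, uniformly in both $n$ and $p$. In characteristic zero one has a clean monomial model and Toh-Yama's argument is essentially a single polytope computation; in characteristic $p$ the collapses forced by vanishing binomial coefficients demand a genuine case analysis, and the real danger is that for some residue of $n$ modulo $p$ --- say when $p\mid n+1$, so that $(u-1)^{n+1}$ degenerates to a sparse power $(u^{p^e}-1)^m$ --- the configuration $\Co(\Gp)$ shrinks just enough to split the critical cone into smooth pieces. Ruling this out, i.e.\ showing that no pair $(n,p)$ makes $\Cp$ a regular subdivision of $\sigma$, is where the bulk of the work lies; once the shape of $\Gp$ is pinned down, the non-smoothness of the distinguished cone is a one-line determinant check.
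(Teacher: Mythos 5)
Your outline correctly identifies the framework (translate to the Gr\"obner fan of $J_n$ in $\Suk$, reduce via normality to showing the fan has a non-regular cone, and control the characteristic-$p$ computation by running it over $\ZZ$ and reducing mod $p$), but the essential content of the argument is left undone. You set up a dichotomy --- large $p$ versus small $p$ --- and for small $p$ you explicitly flag ``the real danger\ldots that for some residue of $n$ modulo $p$\ldots the configuration shrinks just enough to split the critical cone into smooth pieces,'' concluding that ``ruling this out\ldots is where the bulk of the work lies.'' That is precisely the theorem, and the proposal offers no mechanism for doing it. (There is also a smaller slip: the combinatorial description identifies the \emph{normalization} $\Nn$ with $\XGF$, not $\Nash_n(X)$ itself, though your remark that non-singular implies normal does repair the logic.)

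The paper's resolution is the opposite of what you anticipate: there is no $p$-dependent case analysis, and your guess that the answer ``depends on $n$ essentially through the base-$p$ expansion of $n+1$'' is incorrect --- the critical cone is literally the same in every characteristic. Concretely, Lemmas \ref{gn}--\ref{groeb} construct an explicit set $G_n\subseteq J_n\cap\Suz$ whose elements all have leading coefficient $1$ and whose leading monomials are exactly the $\Pnn$ of Toh-Yama. Combined with Toh-Yama's $\lm(\Gn^{(0)})=\Pnn$ over $\CC$, this shows $\pol(\Gn^{(0)})\subseteq\Suz$ (Proposition \ref{Gn Z}). A characteristic-independent Hilbert-function count (Lemma \ref{dim}: $\dim_{\KK}\Suk/\init_{\preceq}(J_n)=\dim_{\KK}\Suk/\langle\Pnn\rangle$, via the regular local ring at $(1,1,1)$) then forces $\langle\lm(\Gn^{(0)}\bmod p)\rangle=\init_{\preceq}(J_n^{(p)})$, so $\Gn^{(p)}=\Gn^{(0)}\bmod p$ (Proposition \ref{mod}). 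Since reducing mod $p$ can only shrink supports, $\Cc\subseteq\Cp$ is automatic; the converse is pinned down by noting that the specific monomials of $\Gn^{(0)}$ that determine the two rays of $\Cc$ appear with coefficient $\pm1$ and hence survive in $\Gn^{(p)}$, giving $\Cp=\Cc$ (Proposition \ref{PropToh218220}). Non-regularity of $\Cp$ then follows from Toh-Yama's characteristic-zero computation for all $p$ at once. Your ``one-line determinant check'' only becomes a one-liner after this uniform identification, which is the heart of the proof.
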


The theory of Nash blowups in the context of toric varieties has been an object of intense study \cite{Reb,GS1,Ataetal,GrigMil,GlezTeis,DuarSurf,DuarteToric,Toh,ChDG}. This paper aims to contribute to this study by emphasizing that the combinatorial nature of toric varieties allows us to work over fields of arbitrary characteristic. Our work also goes along with the modern approach of the theory of toric varieties over fields of arbitrary characteristic \cite{KKMS,Sturm,Liu,GlezTeis}.\\

\noindent\textbf{Convention: }Throughout this paper, $\KK$ denotes an algebraically closed field and all varieties are assumed to be irreducible. In particular, $X$ always denotes an irreducible variety over $\KK$. We denote as $\NN$ the set of non-negative integers and $\ZZ^+$ the set of positive integers.

\section*{Acknowledgments}
We thank Josep \`Alvarez Montaner, and Jack Jeffries for helpful comments. We also thank Rin Toh-yama for his careful reading of a previous version of this paper. We thank the anonymous referee for helpful comments that improved this manuscript.

\section{Higher Nash blowup of normal toric varieties}

\subsection{Higher Nash blowups}

In this subsection we recall the definition of the higher Nash blowup of an algebraic variety, and we state a basic property of these blowups.

Let $X$ be an algebraic variety of dimension $d$, $x\in X$ a $\KK-$point, and $\m$ the maximal ideal corresponding to $x$. Let $x^{(n)}:=\Spec(\Oxx/\m^{n+1})$ be the $nth$ infinitesimal neighborhood of $x$. If $x$ is a non-singular point of $X$, then $x^{(n)}$ is a closed subscheme of $X$ of length $N=\binom{d+n}{d}$. Therefore, it corresponds to a point
$$[x^{(n)}]\in\Hilb_N(X),$$
where $\Hilb_N(X)$ is the Hilbert scheme of $N$ points of $X$. Let $\Si(X)$ denote the singular locus of $X$. We have a map
\begin{align}
\delta_n:X\setminus \Si(&X)\rightarrow\Hilb_N(X),\notag\\
&x\mapsto [x^{(n)}].\notag
\end{align}

\begin{definition}[{\cite[Definition 1.2]{Yas1}}] 
The \textit{Nash blowup of order n of} $X$, denoted by $\Nash_n(X)$, is the closure of the graph of $\delta_n$ with reduced scheme structure in $X\times_{\KK}\Hilb_N(X)$. By restricting the projection $X\times_{\KK}\Hilb_N(X)\rightarrow X$ we obtain a map 
$$\pi_n:\Nash_n(X)\rightarrow X.$$
This map is projective, birational, and it is an isomorphism over $X\setminus\Si(X)$. In addition, $\Nash_1(X)$ is canonically isomorphic to the classical Nash blowup of $X$ \cite[Section 1]{Yas1}.
\end{definition}

The following theorem characterizes smoothness in terms of the usual Nash blowup.

\begin{theorem}[{\cite{Nob,DuarteNB}}]\label{Nob thm}
Let $X$ be a normal variety. Then  $\Nash_1(X)\cong X$ if and only if $X$ is non-singular.
\end{theorem}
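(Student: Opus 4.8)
The plan is to prove the two implications separately and to reduce the hard one to a statement about Kähler differentials that normality can settle. The implication ``$X$ non-singular $\Rightarrow\Nash_1(X)\cong X$'' is immediate: if $X$ is smooth then $\Si(X)=\emptyset$, and $\pi_1$, being an isomorphism over $X\setminus\Si(X)=X$, is an isomorphism. All the substance is in the converse, so assume $\pi_1\colon\Nash_1(X)\to X$ is an isomorphism; we must show every local ring $\OO_{X,x}$ is regular, which over the perfect field $\KK$ is the same as $X$ being non-singular.

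First I would recall that $\Nash_1(X)$, being canonically the classical Nash blowup, is the closure, inside the Grassmann bundle $\Grr_d(\Omega_{X/\KK})$ of locally free rank-$d$ quotients of $\Omega_{X/\KK}$ (with $d=\dim X$), of the tautological section defined over $X\setminus\Si(X)$. Hence if $\pi_1$ is an isomorphism that section extends over all of $X$, giving a surjection $q\colon\Omega_{X/\KK}\twoheadrightarrow\mathcal F$ onto a locally free sheaf of rank $d$ that is an isomorphism over $X\setminus\Si(X)$. Since $\mathcal F$ is torsion-free and $\operatorname{rank}\Omega_{X/\KK}=d$ (using that $\KK$ is perfect, so $\KK(X)/\KK$ is separably generated), the kernel of $q$ is exactly $\tors(\Omega_{X/\KK})$, so $\mathcal F\cong\Omega_{X/\KK}/\tors(\Omega_{X/\KK})$ is locally free of rank $d$. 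The whole problem is thereby reduced to the algebraic assertion: \emph{if $X$ is normal and $\Omega_{X/\KK}/\tors(\Omega_{X/\KK})$ is locally free of rank $d$, then $X$ is regular.}

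To establish this, fix a closed point $x$ and set $A=\OO_{X,x}$, a normal local domain essentially of finite type over $\KK$ with maximal ideal $\m$ and $\dim A=d$. Normality gives $R_1$, so $A$ is regular --- hence smooth over the perfect field $\KK$ --- in codimension $\leq 1$; therefore $\Omega_{X/\KK}$ is already locally free of rank $d$ there, and $\tors(\Omega_{X/\KK})$ is supported in codimension $\geq 2$. Now write a presentation $A\cong\KK[x_1,\dots,x_m]_{\m}/I$ with $I\subseteq\m^2$ and take $d$ generic $\KK$-linear combinations $f_1,\dots,f_d$ of the $x_i$: a generic choice should simultaneously make (i) the images of $df_1,\dots,df_d$ an $A$-basis of $\Omega_{A/\KK}/\tors$ (open and non-empty, since the $\overline{dx_i}$ already generate this free module, so Nakayama extracts a basis), and (ii) $f_1,\dots,f_d$ a system of parameters (open and non-empty by a Bertini-type general-position argument, as $\dim A=d$). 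Completing, one gets a module-finite local homomorphism $R:=\KK\ps{y_1,\dots,y_d}\to\widehat A$, $y_i\mapsto f_i$, a Noether normalization of $\widehat A$, and the conormal sequence gives $\Omega_{\widehat A/R}=\Omega_{\widehat A/\KK}/(df_1,\dots,df_d)\cong\tors(\Omega_{\widehat A/\KK})$, because $\overline{df_1},\dots,\overline{df_d}$ span a free summand of $\Omega_{\widehat A/\KK}$ that meets the torsion trivially. Thus $\widehat A$ is unramified over $R$ in codimension $\leq 1$. Since $\widehat A$ is a normal domain ($A$ being excellent and normal) and $R$ is regular, purity of the branch locus (Zariski--Nagata, valid in all characteristics) forces $\widehat A$ to be unramified over $R$ everywhere, i.e.\ $\Omega_{\widehat A/R}=0$, hence $\tors(\Omega_{\widehat A/\KK})=0$, hence $\Omega_{\widehat A/\KK}$ is free of rank $d=\dim\widehat A$; with residue field the perfect field $\KK$ this yields $\dim_{\KK}\m/\m^2=d$, so $\widehat A$ --- and therefore $\OO_{X,x}$ --- is regular. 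As $x$ was an arbitrary closed point and $\Si(X)$ is closed, $X$ is non-singular.

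I expect the main obstacle to be the passage from ``$\tors(\Omega_{X/\KK})$ supported in codimension $\geq 2$'' to ``$\tors(\Omega_{X/\KK})=0$'': normality is precisely what pushes the torsion out of codimension one, which is what makes purity of the branch locus applicable --- and exactly what fails for the known positive-characteristic counterexamples to Nobile's theorem among non-normal curves, where the torsion of $\Omega$ persists in codimension one and purity yields nothing. The remaining points I would need to pin down carefully are: the identification of $\Nash_1(X)$ with the Grassmann-bundle closure and the resulting reformulation of the condition $\Nash_1(X)\cong X$; the simultaneous genericity of (i) and (ii); and the compatibility of $\Omega$, of torsion, and of the finite map with completion (which is where one should use continuous differentials and faithful flatness of $A\to\widehat A$).
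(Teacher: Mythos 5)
The paper does not actually prove this statement: Theorem \ref{Nob thm} is imported as a citation (Nobile for characteristic zero, Duarte--N\'u\~nez-Betancourt for normal varieties in prime characteristic), so there is no in-paper argument to compare yours against; I can only assess your reconstruction, which I find to be a sound and essentially complete strategy, valid uniformly in all characteristics. Your first step --- $\Nash_1(X)\cong X$ forces a locally free rank-$d$ quotient of $\Omega_{X/\KK}$ extending the tautological one over $X\setminus\Si(X)$, whose kernel is exactly $\tors(\Omega_{X/\KK})$ since the quotient is torsion-free and generically an isomorphism --- is the standard reduction, and the heart of your argument, the purity step, holds up under scrutiny. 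Two points you flag or elide deserve emphasis because they are exactly where the argument could have broken. First, passing to the completion is not merely a technical convenience: purity of the branch locus needs unramifiedness at \emph{all} height-one primes of the finite extension, and only after completing do all height-one primes of $\widehat A$ specialize to the chosen closed point, where your identification $\Omega_{\widehat A/R}\cong\tors$ is available (a Noether normalization of an affine neighborhood would see codimension-one ramification elsewhere over the base and purity would give nothing at $x$); your choice to complete handles this correctly, provided you use universally finite (continuous) differentials, for which the base-change $\widehat\Omega_{\widehat A/\KK}\cong\Omega_{A/\KK}\otimes_A\widehat A$, the cotangent sequence, and the fact that torsion commutes with completion (here one uses that $\widehat A$ is again a normal domain, by excellence) are all standard. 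Second, Zariski--Nagata purity requires $\Frac(\widehat A)/\Frac(R)$ separable; this is automatic in your setup because $\Omega_{\widehat A/R}$ is identified with a torsion module, hence vanishes generically, so the hypothesis costs nothing --- but it should be said, since in prime characteristic a system of parameters need not generate a generically separable Noether normalization in general. The remaining points (simultaneous genericity of the two open conditions on the linear forms, using that $\KK$ is infinite; the splitting $\widehat\Omega_{\widehat A/\KK}=\langle df_1,\dots,df_d\rangle\oplus\tors$ via Nakayama; $\m/\m^2\cong\widehat\Omega_{\widehat A/\KK}\otimes\KK$ since the residue field is $\KK$) are routine. Your closing remark is also on target: normality enters precisely through $R_1$ (pushing the torsion into codimension at least two) and through the normality of $\widehat A$ needed for purity, which is consistent with the failure of the statement for the known non-normal positive-characteristic curve counterexamples mentioned in the introduction.
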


The previous theorem holds without the assumption of normality in characteristic zero \cite{Nob}. We are interested in showing an analog of Theorem \ref{Nob thm} for $n\geq1$ in positive characteristic for normal toric varieties, thus extending the corresponding result in characteristic zero \cite{DuarteToric}.

\subsection{Combinatorial description of the normalization of the higher Nash blowup of a normal toric variety}

In order to study Theorem \ref{nob toric}, we first give a combinatorial description of the higher Nash blowup in arbitrary characteristic. Our study uses the general theory of toric varieties over arbitrary fields \cite{KKMS,Sturm,Liu,GlezTeis}. It is inspired on the combinatorial description of $F$-blowups of toric varieties in positive characteristic \cite[Proposition 3.5]{Yas2}.

\begin{notation}\label{nota toric}
Throughout this section, we use the following notation.
\begin{itemize}
\item $\sigma\subseteq\RR^d$ denotes a strictly convex rational polyhedral cone of dimension $d$. After a suitable change of coordinates, we can assume that $\sd\subseteq\RR^d_{\geq0}$.
\item  Let $\{a_1,\ldots,a_s\}\subseteq\NN^d$ be the minimal set of generators of $\sd\cap\ZZ^d$. In particular, $\KK[\sd\cap\ZZ^d]=\Su\subseteq\Po$. 
\item $X:=\Spec \Su$ denotes the corresponding $d$-dimensional normal toric variety with torus $\tor$. 
\item $J_0:=\langle x^{a_1}-1,\ldots,x^{a_s}-1\rangle\subseteq \Su$ and $J_n:=(J_0)^{n+1}$. Notice that $J_0$ is the maximal ideal corresponding to the closed point $(1,\ldots,1)\in\tor\subseteq  X$.
\end{itemize}
\end{notation}

\begin{remark}\label{disting}
Let $\eta:\Nn\to\Na$ be the normalization of $\Na$. The action of $\tor$ on $X$ induces an action on $\Na$ \cite[Section 2.2]{Yas1}. Hence, $\Na$ is a toric variety with torus $\tor\cong\pi_n^{-1}(\tor)$. In particular, $\Nn$ is also a (normal) toric variety. Let $\Sigma$ be the fan corresponding to $\Nn$. Since $\pi_n\circ\eta:\Nn\to X$ is proper and equivariant \cite[Sections 1.1, 2.2]{Yas1}, it follows that the support of $\Sigma$ is $\sigma$ \cite[Chapter 1, Theorem 8]{KKMS}.
\end{remark}

If $\ch(\KK)=0$, $\Nn$ can be described combinatorially in terms of a Gr\"obner fan \cite[Theorem 2.10]{DuarteToric}. We refer the reader to Sturmfels's book \cite{Sturm} and the first author's paper on Nash blowups of toric varieties \cite{DuarteToric} for the general theory of Gr\"obner bases and Gr\"obner fans over monomial subalgebras. Throughout this paper we use the usual notation for the basic concepts of Gr\"obner bases and Gr\"obner fans.

In this section we show that the combinatorial description for $\Nn$ in terms of a Gr\"obner fan is also valid over fields of arbitrary characteristic. The proof in this case is along the same lines  as in characteristic zero.
We include it for the sake of completeness.

\begin{definition}
Consider Notation \ref{nota toric}. Let $w\in\sigma\cap\ZZ^d$ and $f=\sum c_ux^u\in\Su$. Denote $d_w(f):=\max\{w\cdot u|c_u\neq0\}$ and $f_t:=t^{d_w(f)}f(t^{-w\cdot a_1}x^{a_1},\ldots,t^{-w\cdot a_s}x^{a_s})$. For an ideal $I\subseteq \Su$, we call the ideal $I_t:=\langle f_t|f\in I\rangle\subseteq\Su[t]$ the Gr\"obner degeneration of $I$ with respect to $w$ \cite[Section 15.8]{Eisenbud}.
\end{definition}

\begin{proposition}[{\cite[Proposition 2.7]{DuarteToric}}]\label{refine}
Consider Notation \ref{nota toric}. Let $\Sigma$ be the fan associated to $\Nn$ and $\GF$ be the Gr\"obner fan of $J_n$. Then, $\Sigma$ is a refinement of $\GF$.
\end{proposition}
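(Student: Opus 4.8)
The plan is to show that the toric variety $\Nn$ factors through the Gröbner-fan toric variety $\XGF$, which is exactly the statement that $\Sigma$ refines $\GF$. Concretely, I would proceed as follows. First, recall that for a point $w \in \sigma \cap \ZZ^d$, the one-parameter subgroup $\lambda_w \colon \tor_0 \to \tor$ (where $\tor_0 = \Spec \KK[t,t^{-1}]$) acts on $X$, and the limit $\lim_{t\to 0} \lambda_w(t) \cdot x$ for $x$ in the torus governs which cone of any equivariant toric blowup the point $w$ lands in. The key observation, standard in this circle of ideas (see \cite[Section 15.8]{Eisenbud}), is that the Gröbner degeneration $J_n$ with respect to $w$ computes precisely the flat limit $\lim_{t \to 0} \lambda_w(t) \cdot [J_n]$ inside the appropriate Hilbert scheme: the ideal $\init_w(J_n) := (J_n)_t|_{t=0}$ is the ideal of the limit subscheme.

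Next I would use the description of $\Nn$ from Remark \ref{disting}: since $\pi_n \circ \eta$ is proper and equivariant with $\Supp(\Sigma) = \sigma$, the cone of $\Sigma$ containing a given $w \in \sigma \cap \ZZ^d$ in its relative interior is determined by $\lim_{t\to 0}\lambda_w(t)\cdot p$, where $p \in \pi_n^{-1}(\tor)$ is the point over $(1,\dots,1)$ corresponding to $[x^{(n)}] = [J_n]$, the $n$-th infinitesimal neighborhood at the identity of the torus. Under the projection $\Nn \to \Hilb_N(X)$ (well, $\Nash_n(X) \hookrightarrow X \times \Hilb_N$), this limit point maps to $[\init_w(J_n)] = [\lim_{t\to 0}\lambda_w(t)\cdot [J_n]]$. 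On the other hand, by definition the cone of $\GF$ containing $w$ in its relative interior is exactly the set of $w'$ for which $\init_{w'}(J_n) = \init_w(J_n)$. Therefore, if $w$ and $w'$ lie in the same cone of $\Sigma$ — so that $\lim \lambda_w(t)\cdot p$ and $\lim \lambda_{w'}(t)\cdot p$ lie in the same torus orbit of $\Nn$, hence have the same image under the (separated, equivariant) map to $\Hilb_N$ up to the torus action — one deduces $\init_w(J_n)$ and $\init_{w'}(J_n)$ agree as points of $\Hilb_N$, i.e. $w, w'$ lie in the same cone of $\GF$. Since this holds for all lattice points and each fan is determined by its lattice points, every cone of $\Sigma$ is contained in a cone of $\GF$, which is the definition of refinement.

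The main obstacle, and the step needing the most care, is the compatibility of flat limits with Gröbner degeneration over an arbitrary field: in characteristic zero one can quote \cite[Theorem 2.10]{DuarteToric} or work torus-equivariantly with impunity, but here I must verify that the $J_n$-degeneration $(J_n)_t \subseteq \Su[t]$ is $\Su[t]$-flat (equivalently $\KK[t]$-flat, since $\KK[t]$ is a PID, so it suffices that $\Su[t]/(J_n)_t$ is $t$-torsion-free) and that its special fiber $\init_w(J_n)$ has the same colength $N = \binom{d+n}{d}$ as $J_n$ — this is what lets it define a point of $\Hilb_N(X)$ and identifies it with the honest flat limit. This follows because Gröbner degenerations preserve Hilbert functions (the initial ideal with respect to any term order refining $w$ has the same colength, and the argument is characteristic-free), but one should spell out that the monomial subalgebra $\Su$ and the ideal $J_n$ behave well under this degeneration in any characteristic. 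Once flatness and colength-preservation are in hand, the identification of limits with initial ideals is formal, and the refinement statement drops out as above; I would follow the characteristic-zero argument of \cite[Proposition 2.7]{DuarteToric} essentially verbatim from that point on.
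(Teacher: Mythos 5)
Your plan follows the paper's route closely, and you correctly identify where characteristic-independence must be checked (flatness of the Gr\"obner degeneration and preservation of colength). But the central deduction as you state it has a gap. You claim that for $w, w'$ in the relative interior of the same cone of $\Sigma$, the limits $\lim_{t\to0}\lambda_w(t)$ and $\lim_{t\to0}\lambda_{w'}(t)$ ``lie in the same torus orbit of $\Nn$, hence have the same image\ldots up to the torus action,'' and from this conclude $\init_w(J_n)=\init_{w'}(J_n)$. This does not follow: agreement of ideals up to the torus action is strictly weaker than equality, and two torus-translates of an initial ideal need not coincide --- indeed $\init_w(J_n)$ for $w$ in the relative interior of a non-maximal Gr\"obner cone is generally not monomial, hence not torus-invariant. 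The fact you actually need, and the one the paper uses, is that both limits are literally the \emph{same point} of $\Nn$, namely the distinguished point of the cone $\sigma_1\in\Sigma$ they both lie in; this is the characteristic-free statement \cite[Chapter 1, Theorems 1' and 2(a)]{KKMS}. Once the two limits in $\Nn$ are equal as points, one pushes through $\eta$ to get equal limits in $\Nash_n(X)$, and then the explicit description $\lambda_w(t) = \big((t^{w\cdot a_1},\ldots,t^{w\cdot a_s}),\Spec\Su/(J_n)_t\big)$ together with flatness of the degeneration and \emph{uniqueness} of flat limits over the smooth curve $\Spec\KK[t]$ (\cite[Proposition II-29]{EisHa}) forces the special fibers to coincide, giving $\init_w(J_n) = \init_{w'}(J_n)$. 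You should also invoke that uniqueness-of-flat-limit step explicitly; without it, identifying the limit point's $\Hilb$-component with $\Spec\Su/\init_w(J_n)$ is not automatic. With those two ingredients restored, the rest of your argument is sound and agrees with the paper's proof.
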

\begin{proof}
Let $\sigma_1\in\Sigma$ be a cone different from $\{0\}$. Let $w\in\ZZ^d$ be a vector in the relative interior of $\sigma_1$. Then $w$ belongs to the relative interior of a unique cone $\sigma_2\in\GF$. Let $w'\neq w$ be in the relative interior of $\sigma_1$. We show that $\init_w(J_n)=\init_{w'}(J_n)$, and so, $w'\in\sigma_2$ according to the definition of $\GF$.

Let $\overline{\lambda}_w$ and $\overline{\lambda}_{w'}$ be the corresponding one-parameter subgroups. The following equality of limits in the toric variety $\Nn$ is well known (this is usually proved over $\CC$ but it is true over algebraically closed fields of arbitrary characteristic \cite[Chapter 1, Theorems 1' and 2(a)]{KKMS}):
\begin{equation}\label{limNn}
\lim_{t\to0}\overline{\lambda}_w(t)=\lim_{t\to0}\overline{\lambda}_{w'}(t)=:l\in\Nn.
\end{equation}
Since $\eta$ is an isomorphism over $\pi_n^{-1}(\tor)$, we can consider the induced one-parameter subgroups $\lambda_w:\KK^*\to\pi_n^{-1}(\tor)$ and $\lambda_{w'}:\KK^*\to\pi_n^{-1}(\tor)$ satisfying $\lambda_w=\eta\circ\overline{\lambda}_w$ and $\lambda_{w'}=\eta\circ\overline{\lambda}_{w'}$. 

Let $V\subseteq\Na$ be an open neighborhood of $\eta(l)$. Then $l\in\eta^{-1}(V)\subseteq \Nn$. By (\ref{limNn}), there exists a neighborhood $W\subseteq\KK$ of $0$ such that $\overline{\lambda}_w(W\cap\KK^*)\subseteq \eta^{-1}(V)$. It follows that $\lambda_w(W\cap\KK^*)\subseteq V$. Applying the same arguments to $\lambda_{w'}$ we obtain:
\begin{equation}\label{limNa}
\lim_{t\to0}\lambda_w(t)=\eta(l)=\lim_{t\to0}\lambda_{w'}(t).
\end{equation}

To conclude that $\init_w(J_n)=\init_{w'}(J_n)$ from this equality, we use an argument of limits of fibers in a flat family which appears in the work of several authors (see, for instance, \cite[Proposition 3.5]{Yas2}, \cite[Lecture 1]{Mac}).

We recall that $\lambda_w(t)\in\pi_n^{-1}(\tor)$ has the following explicit description for each $t\in\KK^*$ \cite[Section 2.2, Page 118]{DuarteToric}:
\begin{equation}\label{Eis}
\lambda_w(t)=\Big((t^{w\cdot a_1},\ldots,t^{w\cdot a_s}),\Spec\frac{\Su}{(J_n)_t}\Big), 
\end{equation}
where $(J_n)_t$ is the Gr\"obner degeneration of $J_n$ with respect to $w$. In addition, the family $\Spec\frac{\Su[t]}{(J_n)_t}\to\Spec \KK[t]$ is flat, the fiber over $t\in\KK^*$ is $\Spec\frac{\Su}{(J_n)_t}$ and the fiber over $0$ is $\Spec\frac{\Su}{in_w(J_n)}$ \cite[Theorem 15.17]{Eisenbud}. Since $\Spec\KK[t]$ is non-singular and one-dimensional, it follows that \cite[Proposition II-29]{EisHa}:
$$\lim_{t\to0}\lambda_w(t)=\Big(\lim_{t\to0}(t^{w\cdot a_1},\ldots,t^{w\cdot a_s}),\Spec\frac{\Su}{in_w(J_n)}\Big).$$ 
Applying the same arguments to $\lambda_{w'}$, (\ref{limNa}) implies $\init_w(J_n)=\init_{w'}(J_n)$.
\end{proof}

In order to prove that $\Sigma=\GF$ it is required the following lemma, whose proof is similar to the one of the previous proposition.

\begin{lemma}[{\cite[Lemma 2.9]{DuarteToric}}]\label{orbits}
Let $\sigma_1, \sigma_2\in\Sigma$ be such that the relative interiors of $\sigma_1$, $\sigma_2$ and $\sigma_1\cap\sigma_2$ are contained in the relative interior of some cone $\tau\in\GF$. Then $\eta(\gamma_{\sigma_1})=\eta(\gamma_{\sigma_2})=\eta(\gamma_{\sigma_1\cap\sigma_2})$, where $\gamma_{\sigma_1}$, $\gamma_{\sigma_2}$ and $\gamma_{\sigma_1\cap\sigma_2}$ are the corresponding distinguished points.
\end{lemma}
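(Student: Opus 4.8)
\textbf{Proof proposal for Lemma \ref{orbits}.}

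The plan is to mimic the proof of Proposition \ref{refine}, replacing the analysis of a single limit point by a comparison of the three distinguished points $\gamma_{\sigma_1}$, $\gamma_{\sigma_2}$, $\gamma_{\sigma_1\cap\sigma_2}$. Recall that in a toric variety the distinguished point $\gamma_{\sigma_i}$ of a cone $\sigma_i\in\Sigma$ is precisely $\lim_{t\to0}\overline{\lambda}_{w_i}(t)$ for any $w_i$ in the relative interior of $\sigma_i$. So the first step is to choose vectors $w_1, w_2, w_{12}$ in the relative interiors of $\sigma_1$, $\sigma_2$, and $\sigma_1\cap\sigma_2$ respectively, all lying in $\ZZ^d$; since $\sigma_1\cap\sigma_2$ is a face of both $\sigma_1$ and $\sigma_2$, I can further arrange — by replacing $w_1$ by $w_{12}+\epsilon w_1$ etc.\ for suitable small positive rationals (then clearing denominators) — that all three vectors lie in the relative interior of the common cone $\tau\in\GF$. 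By the definition of the Gr\"obner fan this gives
\[
\init_{w_1}(J_n)=\init_{w_2}(J_n)=\init_{w_{12}}(J_n)=:\init_\tau(J_n).
\]

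Next I would transport this equality of initial ideals into an equality of distinguished points in $\Nn$. Using the explicit description (\ref{Eis}) of the one-parameter subgroup $\lambda_w$ on $\pi_n^{-1}(\tor)$ together with the flatness of the Gr\"obner degeneration $\Spec\frac{\Su[t]}{(J_n)_t}\to\Spec\KK[t]$ and the limit-of-fibers argument (\cite[Proposition II-29]{EisHa}, as in Proposition \ref{refine}), the limit $\lim_{t\to0}\lambda_{w_i}(t)$ depends only on $\init_{w_i}(J_n)$ and on $\lim_{t\to0}(t^{w_i\cdot a_1},\ldots,t^{w_i\cdot a_s})$. The second component is the distinguished point of the cone of $\sigma$ (the fan of $X$) containing $w_i$ in its relative interior; but $w_1,w_2,w_{12}$ all lie in the relative interior of $\tau$, hence in the relative interior of a single cone of $\sigma$, so these second components agree. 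Therefore $\lim_{t\to0}\lambda_{w_1}(t)=\lim_{t\to0}\lambda_{w_2}(t)=\lim_{t\to0}\lambda_{w_{12}}(t)$ in $\Na$. Since $\lambda_{w_i}=\eta\circ\overline{\lambda}_{w_i}$ and $\gamma_{\sigma_i}=\lim_{t\to0}\overline{\lambda}_{w_i}(t)$, continuity of $\eta$ yields $\eta(\gamma_{\sigma_i})=\lim_{t\to0}\lambda_{w_i}(t)$, and the three right-hand sides coincide. This gives $\eta(\gamma_{\sigma_1})=\eta(\gamma_{\sigma_2})=\eta(\gamma_{\sigma_1\cap\sigma_2})$.

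The one point that needs care — and which I expect to be the main obstacle — is the passage from ``equal limits of the induced one-parameter subgroups $\lambda_{w_i}$ on the torus of $\Na$'' back to ``equal images under $\eta$ of the distinguished points of $\Nn$.'' In Proposition \ref{refine} this was handled by pulling back an open neighborhood $V$ of $\eta(l)$ along $\eta$ and using that $\overline{\lambda}_w$ limits to $l\in\eta^{-1}(V)$; here I need the analogous statement for each of $\gamma_{\sigma_1},\gamma_{\sigma_2},\gamma_{\sigma_1\cap\sigma_2}$ separately, so that each $\eta(\gamma_{\sigma_i})$ really equals $\lim_{t\to 0}\lambda_{w_i}(t)$ computed in $\Na$. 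This is a formal consequence of properness/continuity of $\eta$ and the fact that $\overline{\lambda}_{w_i}$ extends over $t=0$ with value $\gamma_{\sigma_i}$ (a standard fact about toric varieties over any algebraically closed field, \cite[Chapter 1, Theorems 1' and 2(a)]{KKMS}), exactly as in the proof of Proposition \ref{refine}; I would simply spell it out once and apply it three times. Everything else is a routine repetition of the flat-family argument already used above.
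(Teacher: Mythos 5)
Your proposal is correct and follows essentially the same approach as the paper: pick lattice vectors $w_i$ in the relative interiors of $\sigma_1$, $\sigma_2$, $\sigma_1\cap\sigma_2$ so that $\lim_{t\to 0}\overline{\lambda}_{w_i}(t)=\gamma_{\sigma_i}$, deduce $\init_{w_1}(J_n)=\init_{w_2}(J_n)=\init_{w_{12}}(J_n)$ from the common cone $\tau\in\GF$, and then re-run the neighborhood-pullback and Gr\"obner-degeneration flat-family argument of Proposition \ref{refine} to identify each $\eta(\gamma_{\sigma_i})$ with one and the same limit in $\Na$. One minor redundancy: the perturbation $w_1\mapsto w_{12}+\epsilon w_1$ is unnecessary, because the hypothesis already asserts that the relative interiors of $\sigma_1$, $\sigma_2$, and $\sigma_1\cap\sigma_2$ are contained in the relative interior of $\tau$, so any lattice vectors chosen there already lie in the relative interior of $\tau$ without adjustment.
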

\begin{proof}
Let $w,w',w''\in\ZZ^d$ be in the relative interior of $\sigma_1$, $\sigma_2$ and $\sigma_1\cap\sigma_2$, respectively. Then, $\lim_{t\to0}\overline{\lambda}_w(t)=\gamma_{\sigma_1}$, $\lim_{t\to0}\overline{\lambda}_{w'}(t)=\gamma_{\sigma_2}$, $\lim_{t\to0}\overline{\lambda}_{w''}(t)=\gamma_{\sigma_1\cap\sigma_2}$ \cite[Chapter 1, Theorem 2]{KKMS}. By hypothesis we have $\init_w(J_n)=\init_{w'}(J_n)=\init_{w''}(J_n)$. The lemma follows using the same arguments of the proof of the previous proposition.
\end{proof}

The following theorem, in   arbitrary characteristic, follows from the previous two results in exactly the same way as done previously in characteristic zero.

\begin{theorem}[{\cite[Theorem 2.10]{DuarteToric}}]\label{t. Nash = GF}
Consider Notation \ref{nota toric}. Let $\Sigma$ be the fan associated to $\Nn$ and let $\GF$ be the Gr\"{o}bner fan of $J_n$. Then $\Sigma=\GrFan(J_n)$. 
\end{theorem}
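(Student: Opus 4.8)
The strategy is to combine Proposition~\ref{refine} with Lemma~\ref{orbits} to upgrade the refinement $\Sigma \succeq \GF$ to an equality of fans. By Proposition~\ref{refine} we already know that every cone of $\Sigma$ is contained in some cone of $\GF$, and that the supports agree (both equal $\sigma$, by Remark~\ref{disting}). So it remains to show that $\Sigma$ does not properly subdivide any cone of $\GF$; equivalently, that each maximal cone $\tau \in \GF$ is already a cone of $\Sigma$. The plan is to take a maximal cone $\tau$ of $\GF$, write $\tau$ as a union of the cones of $\Sigma$ it contains, and argue that this union must consist of a single cone.

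\textbf{Key steps.} First I would fix a maximal cone $\tau \in \GF$ and let $\sigma_1, \ldots, \sigma_r$ be the maximal cones of $\Sigma$ contained in $\tau$ (their union is $\tau$, since supports agree and $\Sigma$ refines $\GF$). Each $\sigma_i$ has the same dimension $d$ as $\tau$. Suppose for contradiction that $r \geq 2$; then two of these cones, say $\sigma_1$ and $\sigma_2$, share a common facet $\rho = \sigma_1 \cap \sigma_2$ of dimension $d-1$, and the relative interiors of $\sigma_1$, $\sigma_2$, and $\rho$ are all contained in the relative interior of $\tau$. By Lemma~\ref{orbits}, the distinguished points satisfy $\eta(\gamma_{\sigma_1}) = \eta(\gamma_{\sigma_2}) = \eta(\gamma_\rho)$. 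Now I would use the fact that $\eta\colon \Nn \to \Na$ is the normalization, hence a finite birational morphism, together with the toric-orbit structure: the distinguished points $\gamma_{\sigma_1}$ and $\gamma_{\sigma_2}$ are the $\tor$-fixed points of the affine charts $U_{\sigma_1}$ and $U_{\sigma_2}$ of $\Nn$, and these are \emph{distinct} points of $\Nn$ since $\sigma_1 \neq \sigma_2$ are distinct maximal cones. To derive a contradiction, I would examine the orbit decomposition: in a normal toric variety the orbit--cone correspondence is a bijection, so $\gamma_{\sigma_1} \neq \gamma_{\sigma_2}$ lie in distinct $\tor$-orbits of $\Nn$. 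Since $\eta$ is equivariant and birational, and $\Na$ has a $\tor$-action whose orbits are the images of the orbits of $\Nn$, the equality $\eta(\gamma_{\sigma_1}) = \eta(\gamma_{\sigma_2})$ forces these two orbits of $\Nn$ to map to the same orbit of $\Na$; but a finite birational equivariant map between toric varieties can identify two distinct orbits only if it fails to be injective on a dense open set of one of them, contradicting birationality — or, more precisely, one shows directly that over the image orbit $\eta$ would have to be non-injective, contradicting the fact that the normalization of an irreducible variety is birational and hence injective over a dense open subset meeting that orbit. Tracing through the orbit dimensions (both $\gamma_{\sigma_i}$ are fixed points, i.e.\ $0$-dimensional orbits) pins down that $\eta$ must collapse two fixed points to one, which is incompatible with $\eta$ being a bijection on the torus $\pi_n^{-1}(\tor)$ and proper with connected fibers over the normal variety $\Na$ only if those fibers are points — the upshot being $r = 1$, so $\tau = \sigma_1 \in \Sigma$. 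Since $\tau$ was an arbitrary maximal cone of $\GF$, we conclude $\GF \subseteq \Sigma$, and combined with Proposition~\ref{refine} this gives $\Sigma = \GF$.

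\textbf{Main obstacle.} The delicate point is the step that converts ``$\eta$ identifies the distinguished points of $\sigma_1$ and $\sigma_2$'' into ``$\sigma_1 = \sigma_2$''. This needs a clean argument that a normal toric variety (here $\Nn$) cannot have two distinct maximal cones whose distinguished points are glued together by its normalization map to $\Na$ — in other words, one must rule out that $\Na$ itself is ``non-separated-looking'' along these charts in a way the normalization repairs. The cleanest route is probably to observe that $\eta$ restricted to $\pi_n^{-1}(\tor)$ is an isomorphism (stated in the excerpt), that $\gamma_{\sigma_i}$ lies in the closure of $\pi_n^{-1}(\tor)$, and that $\eta$ being finite and birational means it is injective on a dense open set whose closure is all of $\Nn$; then a separatedness/valuative argument (or the orbit--cone bijection for the \emph{normal} variety $\Nn$ applied after pulling back) forces the two cones to coincide. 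This is exactly the argument used in characteristic zero in \cite[Theorem 2.10]{DuarteToric}, and since all the geometric inputs invoked — properness and equivariance of $\pi_n \circ \eta$, the orbit--cone dictionary, limits of one-parameter subgroups — hold over any algebraically closed field by \cite{KKMS}, the proof carries over verbatim. I would simply cite that the argument is identical to the characteristic-zero case.
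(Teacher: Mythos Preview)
Your overall plan matches the paper's exactly: the paper also simply says that the theorem follows from Proposition~\ref{refine} and Lemma~\ref{orbits} ``in exactly the same way'' as in the characteristic-zero source \cite[Theorem~2.10]{DuarteToric}, and your final sentence is precisely that.

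However, your sketch of the contradiction step is where the argument wobbles. You try to derive a contradiction from $\eta(\gamma_{\sigma_1}) = \eta(\gamma_{\sigma_2})$, arguing via birationality, connected fibers, or injectivity on a dense open. None of these quite closes the gap: the normalization $\eta$ is finite and birational but certainly can identify isolated points (think of a nodal curve), it need not have connected fibers, and $\Na$ is not normal, so Zariski's Main Theorem in the form you allude to does not apply directly. Two $\tor$-fixed points mapping to one is not, by itself, a contradiction.

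The clean contradiction comes instead from $\gamma_{\rho}$, and you have all the ingredients for it. Since $\rho = \sigma_1 \cap \sigma_2$ has dimension $d-1$, the orbit $O_\rho = \tor \cdot \gamma_\rho$ is one-dimensional. On the other hand, $\gamma_{\sigma_1}$ is a $\tor$-fixed point (as $\sigma_1$ is full-dimensional), so $\eta(\gamma_{\sigma_1})$ is fixed as well. From Lemma~\ref{orbits} you have $\eta(\gamma_\rho) = \eta(\gamma_{\sigma_1})$, hence $\eta(\gamma_\rho)$ is $\tor$-fixed; by equivariance of $\eta$ the entire orbit $O_\rho$ maps to that single point. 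But $\eta$ is a finite morphism, so its fibers are finite --- contradicting that $O_\rho$ is one-dimensional. This forces $r=1$ and hence $\tau \in \Sigma$, finishing the proof. Replace your ``main obstacle'' paragraph with this argument and the sketch becomes complete.
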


In the case of characteristic-zero fields, the previous description was used to prove the following result.

\begin{theorem}[{\cite[Corollary 3.8]{DuarteToric}}]\label{Nobile toric positive}
Let  $X$ be a normal toric variety. If $\Nash_n(X)\cong X$, then $X$ is a non-singular variety.
\end{theorem}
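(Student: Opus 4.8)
The plan is to reduce the positive-characteristic statement to the combinatorial criterion already available, exactly as in characteristic zero, and then invoke Theorem~\ref{Nob thm} (the normal case of Nobile's theorem, valid in prime characteristic by \cite{DuarteNB}) to handle the one remaining gap. First I would observe that, by Theorem~\ref{t. Nash = GF}, the normalization $\Nn$ of $\Na$ is the toric variety of the Gr\"obner fan $\GF$, and this description is now available over $\KK$ of arbitrary characteristic. Hence $\Nash_n(X)\cong X$ forces, after passing to normalizations, the Gr\"obner fan $\GF$ to coincide with the fan $\sigma$ of $X$ itself (a single maximal cone with its faces); this is a purely combinatorial condition on $J_n$ that does not see the characteristic. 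At this point one knows that $\Na\to X$ is finite and birational onto the normal variety $X$, hence an isomorphism on normalizations, but one still has to rule out that $\Na$ itself could be a non-normal variety mapping isomorphically nowhere — however the hypothesis is precisely $\Nash_n(X)\cong X$, so $\Na$ is already normal and equals $X$.

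The core step is therefore: assuming $\GF=\{\sigma \text{ and its faces}\}$ (equivalently, the Gr\"obner fan of $J_n$ does not subdivide $\sigma$ at all), conclude that $X$ is smooth. Here I would follow the characteristic-zero argument of \cite[Corollary 3.8]{DuarteToric}. The key point is that if $X$ is singular, then $\sigma$ is not a regular (unimodular) cone, and one must produce a weight vector $w$ in the interior of $\sigma$ whose initial ideal $\init_w(J_n)$ differs from $\init_{w'}(J_n)$ for some other interior $w'$, contradicting triviality of $\GF$. Concretely, one exploits the explicit generators $x^{a_i}-1$ of $J_0$ and hence the generators of $J_n=(J_0)^{n+1}$: the leading-term behavior of products $\prod (x^{a_{i_k}}-1)$ under a weight $w$ is governed by which of the monomials $x^{\sum a_{i_k}}$ and $1$ wins, i.e.\ by the sign of $w\cdot\sum a_{i_k}$, and since all $a_i\in\NN^d$ lie in $\sd$ this sign is dictated by the pairing between $\sigma$ and $\sd$. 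When $\sigma$ is non-regular the lattice $\ZZ^d$ is strictly larger than $\ZZ a_1+\cdots+\ZZ a_s$'s saturation in the relevant directions (more precisely $\sd\cap\ZZ^d$ is not generated by a regular system), and this forces at least two distinct initial ideals, so $\GF$ is nontrivial.

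The one genuinely new ingredient in prime characteristic is the case $n$ for which the relevant initial-ideal comparison, in characteristic zero, was carried out using derivations or partial derivatives of the generators (differential operators of order $\le n$), which can degenerate modulo $p$. I would either: (i) check that the combinatorial comparison above uses only multiplicative/monomial structure of the $x^{a_i}-1$ and their powers, never the characteristic, in which case nothing changes; or, if a smoothness criterion really does enter, (ii) appeal to Theorem~\ref{Nob thm} to get that $\Nash_1(X)\ncong X$ when $X$ is singular and normal, and then relate $\Nash_n$ to $\Nash_1$ combinatorially: since $\GF$ refines $\GrFan(J_1)$ (because $J_n=(J_0)^{n+1}\subseteq J_0^{2}\subseteq\cdots$, and higher powers can only refine the fan — a monotonicity of Gr\"obner fans under taking powers of a fixed ideal, which I would verify), triviality of $\GrFan(J_n)$ would force triviality of $\GrFan(J_1)$, hence $\Nash_1(X)\cong X$ on normalizations, hence (by normality of $X$ and Theorem~\ref{Nob thm}) $X$ smooth.

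The main obstacle I anticipate is precisely establishing that monotonicity (or otherwise controlling how $\GF$ depends on $n$): one needs that the Gr\"obner fan of $J_n = J_0^{n+1}$ refines that of $J_0^{m+1}$ for $m\le n$, which is plausible but not completely formal since initial ideals do not behave simply under products in general. If that monotonicity fails or is awkward, the fallback is the direct route (i): carefully redo the computation of \cite[Corollary 3.8]{DuarteToric} tracking that every inequality used is about lattice points and dot products with $w$, never about invertibility of binomial coefficients, so that the argument is characteristic-free as written. I expect route (i) to in fact succeed, making the proof essentially a verification that the characteristic-zero argument never divided by anything, combined with the already-established Theorem~\ref{t. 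Nash = GF} in arbitrary characteristic.
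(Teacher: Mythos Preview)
First, a framing remark: the statement you were given (Theorem~\ref{Nobile toric positive}) is in the paper only as a citation of the characteristic-zero result from \cite{DuarteToric}; the paper does not reprove it. What the paper does prove is the positive-characteristic analogue, Theorem~\ref{t. analogue Nobile}, via Lemmas~\ref{l. h in J_n} and~\ref{l. p<n+1}. Since your proposal is visibly aimed at the positive-characteristic case, I compare it to that.

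Your route~(i) --- hoping the proof in \cite{DuarteToric} is characteristic-free once one checks it never divides by anything --- is explicitly ruled out by the paper. Right after stating Theorem~\ref{Nobile toric positive} the authors write that ``there appear some coefficients of polynomials that might turn zero in arbitrary characteristic.'' Concretely, the argument in \cite{DuarteToric} produces an element $h\in J_n$ by taking the \emph{linear part} of a binomial relation among the $x^{a_i}$; the coefficients of that linear part are the exponents $\lambda_i$ in a lattice relation $\sum\lambda_i a_i=0$, and these can all be divisible by~$p$. So the proof is not characteristic-free as written; this is exactly why the paper needs Lemma~\ref{l. h in J_n}, whose content is a $p$-adic case analysis (factoring out the highest common power of~$p$ from the $\lambda_i$, and a separate argument when $r=d$) to force at least one surviving nonzero coefficient modulo~$p$.

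Your route~(ii) --- triviality of $\GrFan(J_n)$ forces triviality of $\GrFan(J_1)$, then invoke Theorem~\ref{Nob thm} --- is an interesting idea, and you correctly flag its weak point. The claimed monotonicity ``$\GrFan(J_0^{n+1})$ refines $\GrFan(J_0^{m+1})$ for $n\ge m$'' is not standard: in general $\init_w(I^k)$ need only \emph{contain} $(\init_w I)^k$, so equality of initial ideals of a power does not formally propagate down to equality of initial ideals of a lower power. The paper neither states nor uses such a refinement, and I do not see an easy proof of it; without it, route~(ii) is a genuine gap, not just an omitted verification.

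What the paper actually does is different from both of your routes. Assuming $\sigma$ is not regular, Lemma~\ref{l. h in J_n} manufactures, by the $p$-adic reduction above, a polynomial $h\in J_n$ and a weight $w$ in the interior of $\sigma$ with $\lt_{>_w}(h)=(x^{a_i})^n$ for some \emph{edge} generator $a_i$ of $\sd$. Lemma~\ref{l. p<n+1} shows $(x^{a_i}-1)^n\notin J_n$, which (as in the characteristic-zero argument of \cite[Theorem 3.7]{DuarteToric}) forces the reduced Gr\"obner basis with respect to $>_w$ to contain an element whose $w$-leading term is $(x^{a_i})^n$ but which has another monomial of strictly different $w'$-weight for some nearby $w'$; hence $\init_w(J_n)\neq\init_{w'}(J_n)$ and $\GF$ is nontrivial. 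Your proposal does not arrive at this construction, and neither of your suggested shortcuts closes the gap.
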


The proof of this result requires the assumption on the characteristic of the field: there appear some coefficients of polynomials that might turn zero in arbitrary characteristic. In the following section we adapt the proof of the previous theorem for positive characteristic. 

\subsection{A characterization of smoothness for toric varieties}

As in Notation \ref{nota toric}, let $X$ be the normal toric variety defined by the cone $\sigma$. We show that if $X$ is singular, then $\pi_n\circ\eta$ is not an isomorphism. This implies that $\pi_n$ is not an isomorphism since $X$ is normal. By Theorem \ref{t. Nash = GF}, it is enough to show that $\GF$ is a non-trivial subdivision of $\sigma$, and so, $\pi_n\circ\eta$ is not injective.

By definition of Gr\"obner fan, we need to find $w$, $w'\in\sigma$ such that $\init_w(J_n)\neq \init_{w'}(J_n)$. This is equivalent to the following fact. Fix $w$ in the interior of $\sigma$ and let $>$ be any monomial order on $\Su$. Let $G$ be the reduced Gr\"obner basis of $J_n$ with respect to the refined order $>_{w}$. Then $\init_w(J_n)\neq \init_{w'}(J_n)$ for some $w'\in\sigma$ if and only if $\init_w(g)\neq \init_{w'}(g)$ for some $g\in G$ \cite[Section 1.2]{DuarteToric}. This is what we prove. 

Recall that $\{a_1,\ldots,a_s\}\subseteq\ZZ^d_{\geq0}$ denotes the minimal set of generators of $\sd\cap\ZZ^d$. It is known that the set $\{a_1,\ldots,a_s\}$ contains the ray generators of the edges of $\sd$ which we denote, after renumbering if necessary, by $\{a_1,\ldots,a_r\}$, as well as possibly some points in the relative interior of $\{\sum_{i=1}^{r}\lambda_ia_i|0\leq\lambda_i<1\}$ \cite[Proposition 1.2.23]{CLS}. Since $\sd$ has dimension $d$, we must have $r\geq d$. Let us assume that $\sigma$ is not a regular cone.

\begin{lemma}\label{l. h in J_n}
Let $\ch(\KK)=p>0$. In the context of Notation \ref{nota toric}, there exist $h\in J_n$ and $w$ in the relative interior of $\sigma$ such that $\lt_{>_w}(h)=(x^{a_i})^n$ for some $i\in\{1,\ldots,r\}$.
\end{lemma}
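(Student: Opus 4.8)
The goal is to exhibit, over a field of characteristic $p>0$, an element $h\in J_n=(J_0)^{n+1}$ whose leading term with respect to a weight-refined order $>_w$ (with $w$ interior to $\sigma$) equals $(x^{a_i})^n$ for some edge generator $a_i$. I would start from the observation that $J_0=\langle x^{a_1}-1,\ldots,x^{a_s}-1\rangle$ is the maximal ideal of the identity point of the torus, so $(x^{a_i}-1)^{n+1}\in J_n$ for each $i$. Expanding $(x^{a_i}-1)^{n+1}=\sum_{k=0}^{n+1}\binom{n+1}{k}(-1)^{n+1-k}(x^{a_i})^k$, the top-degree monomial $(x^{a_i})^{n+1}$ has coefficient $1$, but this is degree $n+1$, not $n$. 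So the naive binomial power is not quite what we want; the point of the lemma (and the reason characteristic matters) is to produce an element whose leading \emph{monomial} is exactly $(x^{a_i})^n$.

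\textbf{Main construction.} I would build $h$ as a suitable $\Su$-combination of products of the $n+1$ generators $x^{a_j}-1$. Concretely, consider elements of the form $(x^{a_i}-1)^{n+1-m}\cdot\prod_{\text{other factors}}(x^{a_{j_\ell}}-1)$ lying in $J_n$, and take a linear combination over $\ZZ$ (hence over $\KK$) designed so that: (i) all monomials of $w$-degree strictly greater than $n\,(w\cdot a_i)$ cancel, and (ii) the monomial $(x^{a_i})^n$ survives with a coefficient that is a nonzero integer \emph{modulo $p$}. For $w$ in the relative interior of $\sigma$, we have $w\cdot a_j>0$ for all $j$ and, crucially, among the edge generators the linear functional $u\mapsto w\cdot u$ is maximized on $n\cdot a_i$ over the relevant finite set of exponent vectors appearing — this is where I would use that $a_i$ is an edge (ray) generator of $\sd$, so that $n a_i$ is a vertex of the Newton polytope of the candidate $h$ and hence $w\cdot(n a_i)$ is strictly larger than $w\cdot u$ for every other monomial $x^u$ occurring in $h$. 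Then $>_w$ picks out $(x^{a_i})^n$ regardless of the tie-breaking monomial order $>$, giving $\lt_{>_w}(h)=(x^{a_i})^n$.

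\textbf{The characteristic-$p$ input.} The reason the characteristic-zero argument of \cite{DuarteToric} fails here, and why I expect the combinatorial bookkeeping of the coefficients to be the crux, is that the coefficient of $(x^{a_i})^n$ in the characteristic-zero construction is some integer that may vanish mod $p$; conversely, one should be able to choose the combination of products so that the coefficient of $(x^{a_i})^n$ is a unit in $\KK$. A clean way to force this: use Frobenius-type identities. Since $(x^{a_i}-1)^{p^e}=(x^{a_i})^{p^e}-1$ in characteristic $p$, one can split $n$ in base $p$ and manufacture an element of $J_n$ in which $(x^{a_i})^n$ appears with coefficient $\pm 1$ by multiplying together the "Frobenius-collapsed" factors $(x^{a_i})^{p^e}-1$ corresponding to the base-$p$ digits of $n$, padding with extra factors $(x^{a_j}-1)$ (for arbitrary $j$, e.g. $j=i$) so that the total number of factors is $n+1$ and the product lies in $(J_0)^{n+1}=J_n$. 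After expansion, the unique monomial of maximal $w$-degree is $(x^{a_i})^n$ (using again that $a_i$ is an edge generator so this exponent is extremal), and tracking its coefficient through the telescoping products shows it is $\pm 1\neq 0$ in $\KK$.

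\textbf{Anticipated obstacle.} The delicate part is checking the extremality claim: after forming the product and clearing it into a genuine element of $\Su$, I must verify that no \emph{other} monomial $x^u$ with $w\cdot u\ge w\cdot(n a_i)$ occurs — including possible cross-terms — and that the surviving top coefficient is really a $p$-unit rather than an accidental multiple of $p$ coming from the padding factors. I would handle the first issue by keeping all factors of the form $(x^{a_i})^{p^e}-1$ (so every monomial in the product is a product of powers of $x^{a_i}$ and $1$'s, hence of the form $(x^{a_i})^k$ with $0\le k\le n+\text{(pad)}$), choosing the number of padding factors $(x^{a_i}-1)$ so the top exponent is exactly $n$ after noting that the genuine top term $(x^{a_i})^{n+1}$ cancels against a lower adjustment, and handle the second by an explicit but short computation of the relevant binomial/telescoping coefficient modulo $p$. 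I expect this coefficient computation, together with the verification that $w$ can be chosen in the relative interior of $\sigma$ to separate $n a_i$ from all other exponents (which uses $r\ge d$ and that $\sigma$ is not regular only insofar as we need $s\ge 2$ generators to have something to work with), to be the only real content; the rest is formal.
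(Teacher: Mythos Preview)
There is a genuine gap. Your concrete construction in the ``characteristic-$p$ input'' and ``anticipated obstacle'' paragraphs uses only factors of the form $(x^{a_i})^{p^e}-1=(x^{a_i}-1)^{p^e}$ and padding factors $(x^{a_i}-1)$, so the resulting $h$ is a polynomial in the single variable $x^{a_i}$ of degree $\leq n$ (you explicitly want the top exponent to be $n$). But no such polynomial can lie in $J_n=(J_0)^{n+1}$. Indeed, localizing at $J_0$ gives a regular local ring in which $x^{a_i}-1$ is part of a regular system of parameters (this is exactly the content of the companion Lemma~\ref{l. p<n+1}); hence the classes of $1,(x^{a_i}-1),\ldots,(x^{a_i}-1)^n$ are linearly independent modulo $(J_0)^{n+1}$, so any nonzero polynomial in $x^{a_i}$ of degree $\leq n$ lies outside $J_n$. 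Your suggested ``cancellation of the top term $(x^{a_i})^{n+1}$ against a lower adjustment'' therefore cannot produce an element of $J_n$ with leading monomial $(x^{a_i})^n$.

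The deeper issue is that you treat the hypothesis ``$\sigma$ is not regular'' as incidental (``only insofar as we need $s\ge 2$ generators''). In fact the lemma is \emph{false} when $\sigma$ is regular: then $\Su$ is a polynomial ring, and by the same associated-graded argument no element of $(J_0)^{n+1}$ can have leading term $(x^{a_i})^n$. The paper's proof uses the singularity in an essential way: since $s>d$, there is an integer relation $\sum_{i\le t}\lambda_i a_i=\sum_{i>t}\lambda_i a_i$ among $a_1,\ldots,a_{d+1}$, and the corresponding binomial in $\ker\phi$ yields, after taking its linear part at $(1,\ldots,1)$, an element $h=\sum_i \delta_i x^{a_i}+c\in J_1=J_0^2$ which is \emph{linear} in the $x^{a_j}$'s. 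A case analysis (on whether $p$ divides the $\lambda_i$'s, and whether $r>d$ or $r=d$) then ensures some coefficient $\delta_i$ with $i\le r$ is a unit, and one chooses $w$ so that $\lt_{>_w}(h)=x^{a_i}$. The induction step to general $n$ is then immediate by multiplying by $(x^{a_i}-1)^{n-1}$. Your proposal does not engage with the relation among the $a_j$'s at all, and without it the desired $h$ does not exist.
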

\begin{proof}
We proceed by induction on $n$. Using the binomials $x^{a_i}-1$, it is enough to show the case $n=1$. Consider the following map of $\KK$-algebras:
$$\phi:\KK[y_1,\ldots,y_s]\rightarrow\Su,\mbox{ }\mbox{ }\mbox{ }y_i\mapsto x^{a_i}.$$
Let $\overline{J_1}:=\langle y_1-1,\ldots,y_s-1 \rangle^2+\ker\phi$. Since $\sigma$ is not a regular cone, we must have $s>d$. Since $\sd$ has dimension $d$ we may assume, after renumbering if necessary, that $\{a_1,\ldots,a_d\}$ is linearly independent. Let $A$ be the matrix whose columns are $a_1,\ldots,a_d$, in this order. Let $\lambda':=A^{-1}a_{d+1}\in\QQ^d$. 
By multiplying by suitable integers and after renumbering if necessary, we obtain the following relation:
\begin{equation}\label{relation}
\lambda_1a_1+\cdots+\lambda_ta_t=\lambda_{t+1}a_{t+1}+\cdots+\lambda_{d+1}a_{d+1},
\end{equation}
where $\lambda_i\in\NN$ for all $i$, and for some $t\in\{1,\ldots,d\}$. 

Let $\bar{f}:=y_1^{\lambda_1}\cdots y_t^{\lambda_t}-y_{t+1}^{\lambda_{t+1}}\cdots y_{d+1}^{\lambda_{d+1}}\in\Poz$ and $f\in\Poy$ the polynomial it induces under the canonical homomorphism $j:\ZZ_p\hookrightarrow\KK$. By (\ref{relation}), $f\in\ker\phi.$ In particular, $f\in\ker\phi+\langle y_1-1,\ldots,y_s-1 \rangle^2=\overline{J_1}$. 

Let $\bar{h}:=\delta_1(y_1-1)+\cdots+\delta_{d+1}(y_{d+1}-1)\in\Poz$, where $\delta_i=\lambda_i\mod p$ for $i\leq t$ and $\delta_i=-(\lambda_i \mod p)$, for $i\geq t+1$. Let $\tilde{h}\in\Poy$ be the polynomial that $\bar{h}$ induces. Then $\tilde{h}$ is the linear part of the Taylor expansion of $f$ around $(1,\ldots,1)\in \KK^s$. Since $f\in\overline{J_1}$, we obtain $\tilde{h}\in\overline{J_1}$, and so $h:=\phi(\tilde{h})=\sum_{i}j(\delta_i)x^{a_i}+c\in J_1$ for some $c\in \KK$.

Now consider the following cases (recall that $r$ denotes the number of edges of $\sd$):
\begin{itemize}
\item[(1)] Suppose that $r>d$. Thus $a_1,\ldots,a_{d+1}\in\{a_1,\ldots,a_r\}$.
\begin{itemize}
\item[(1.1)] $p\nmid\lambda_i$ for some $1\leq i \leq d+1$. Then $\delta_i\neq0$ in $\ZZ_p$ and $\lt_{>_w}(h)=x^{a_i}$, for some $i\in\{1,\ldots,r\}$ and any $w\in\sigma$, as desired.
\item[(1.2)] Assume that $p|\lambda_i$ for all $i$. For each $i$, let $\lambda_i:=p^{l_i}m_i$, where $l_i\geq1$ and $p\nmid m_i$.
Then (\ref{relation}) becomes
$$p^{l_1}m_1a_1+\cdots+p^{l_t}m_ta_t=p^{l_{t+1}}m_{t+1}a_{t+1}+\cdots+p^{l_{d+1}}m_{d+1}a_{d+1}.$$
Let $l=\min_i\{l_i\}$ and $k_i:=l_i-l$. Factoring $p^l$ we obtain:
$$p^{k_1}m_1a_1+\cdots+p^{k_t}m_ta_t=p^{k_{t+1}}m_{t+1}a_{t+1}+\cdots+p^{k_{d+1}}m_{d+1}a_{d+1},$$
where $k_i=0$ for some $i$. Substituting (\ref{relation}) by this last equation takes us to the case $(1.1)$.
\end{itemize}
\item[(2)]Suppose that  $r=d$. 
\begin{itemize}
\item[(2.1)] $p\nmid\lambda_{i}$ for some $1\leq i \leq d$. Assume that $i=d$. Since $\{a_1,\ldots,a_s\}$ is the minimal set of generators of $\sd\cap\ZZ^d$, we have that $a_{d+1}=\sum_{i=1}^{d}\tau_ia_i$, for some $0\leq\tau_i<1$. Denote by $H$ the hyperplane generated by $\{a_1,\ldots,a_{d-1}\}$. Then $H\cap\sd$ is a facet of $\sd$, i.e., there exists $w\in\sigma$ such that $w^{\perp}=H$. In particular, $w\cdot a_i=0$ for $i=1,\ldots,d-1$, and $w\cdot a_d>0$. If $a_{d+1}\in H$ then $\lt_{>_w}(h)=x^{a_d}$, as desired. Otherwise, $w\cdot a_{d+1}>0$. Now we choose $w'$ sufficiently close to $w$ in the relative interior of $\sigma$ and such that $0<w'\cdot a_i<w'\cdot a_d$ and $0<w'\cdot a_i<w'\cdot a_{d+1}$ for all $i=1,\ldots,d-1$. We know that $a_{d+1}=\sum_{i=1}^{d}\tau_ia_i$, where, in particular, $0<\tau_d<1$. This fact allow us to choose $w'$ satisfying also $w'\cdot a_{d+1}<w'\cdot a_d$. Therefore $\lt_{>_{w'}}(h)=x^{a_d}$.
\item[(2.2)] $p|\lambda_i$ for all $1\leq i\leq d$ and $p\nmid \lambda_{d+1}$. Using the notation of case $(1.2)$ we obtain:
$$p^{l_1}m_1a_1+\cdots+p^{l_t}m_ta_t-p^{l_{t+1}}m_{t+1}a_{t+1}-\cdots-p^{l_d}m_da_d=\lambda_{d+1}a_{d+1}.$$
Since $\{a_1,\ldots,a_d\}$ is linearly independent, $\lambda_{d+1}\neq0$. Let $l=\min_i\{l_i\}$ and $k_i=l_i-l$. Then 
$$p^{l}\big(p^{k_1}m_1a_1+\cdots+p^{k_t}m_ta_t-p^{k_{t+1}}m_{t+1}a_{t+1}-\cdots-p^{k_d}m_da_d\big)=\lambda_{d+1}a_{d+1}.$$
Let $v:=p^{k_1}m_1a_1+\cdots+p^{k_t}m_ta_t-p^{k_{t+1}}m_{t+1}a_{t+1}-\cdots-p^{k_d}m_da_d$. Then $v=\frac{\lambda_{d+1}}{p^l}a_{d+1}$. Since $v\in\ZZ^d$ and $p\nmid\lambda_{d+1}$, it follows that $p$ divides each entry of $a_{d+1}$, a contradiction.
\item[(2.3)] $p|\lambda_i$ for all $1\leq i\leq d+1$. Proceed as in (1.2) to reduce this case to cases (2.1) or (2.2).
\end{itemize}
\end{itemize}
\end{proof}

\begin{lemma}\label{l. p<n+1}
Consider Notation \ref{nota toric}. If $m<n+1$, then $(x^{a_i}-1)^m\notin J_n$ for every $i$.
\end{lemma}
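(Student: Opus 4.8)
The plan is to show that membership of $(x^{a_i}-1)^m$ in $J_n = J_0^{n+1}$ is obstructed by a grading/degree argument. First I would introduce the natural $\ZZ^d$-multigrading (equivalently, a positive grading) on $\Su = \KK[\sd\cap\ZZ^d]$ coming from the torus action, and observe that although $J_0 = \langle x^{a_1}-1,\ldots,x^{a_s}-1\rangle$ is not homogeneous, its associated graded object with respect to the $J_0$-adic filtration \emph{is} well-behaved: $J_0/J_0^2$ is a $\KK$-vector space of dimension $d$ (the cotangent space of $\tor$ at the identity $(1,\ldots,1)$, which is smooth of dimension $d$), spanned by the classes of $x^{a_1}-1,\ldots,x^{a_s}-1$. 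More generally, since $(1,\ldots,1)$ is a smooth point of $X$ (it lies on the torus), the local ring at this point is regular of dimension $d$, so $\bigoplus_m J_0^m/J_0^{m+1}$ is a polynomial ring in $d$ variables; in particular $J_0^m/J_0^{m+1}$ has the Hilbert function of a $d$-dimensional regular local ring.

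The key step is then the following. Suppose for contradiction that $(x^{a_i}-1)^m \in J_n = J_0^{n+1}$ with $m \le n$. Write $u = x^{a_i}-1$, which lies in $J_0$ but not in $J_0^2$ (its class is one of the generators of the $d$-dimensional space $J_0/J_0^2$, and in a regular local ring a minimal generator of the maximal ideal is not in its square). In a regular local ring $(\cO, \m)$ with $u \in \m \setminus \m^2$, one has $u^m \in \m^{m+1}$ if and only if $u^m = 0$ or ... no: in fact $u^m \notin \m^{m+1}$ for \emph{every} $m \ge 0$, because $u$ being a minimal generator can be extended to a regular system of parameters $u, v_2,\ldots,v_d$, and then $u^m$ has nonzero image in $\m^m/\m^{m+1} \cong \operatorname{Sym}^m$ of the cotangent space. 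So working in the localization $\cO := \OO_{X,(1,\ldots,1)}$, which is regular with maximal ideal $\m = J_0\cO$, we get $u^m \notin \m^{m+1} = J_0^{m+1}\cO$. Since $m < n+1$ means $m+1 \le n+1$, we have $J_0^{n+1}\cO \subseteq J_0^{m+1}\cO = \m^{m+1}$, so $u^m = (x^{a_i}-1)^m \notin J_0^{n+1}\cO$, hence $(x^{a_i}-1)^m \notin J_0^{n+1} = J_n$. This is the desired contradiction, and it completes the proof.

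The main obstacle, and the point that needs the most care, is the passage between the non-local ring $\Su$ and the local ring $\cO = \OO_{X,(1,\ldots,1)}$: one must check that $J_0 \Su$ localizes to the maximal ideal $\m$ of $\cO$ (clear, since $J_0$ is by Notation \ref{nota toric} precisely the maximal ideal of the point $(1,\ldots,1)\in\tor\subseteq X$, and all other maximal ideals containing $J_0^{n+1}$ also contain $J_0$, so localizing is harmless for the non-membership direction), and that powers commute with localization, i.e. $(J_0^{n+1})\Su_{J_0} = (J_0 \Su_{J_0})^{n+1}$, which is a standard fact. One also needs the regularity of $\cO$: this is exactly the statement that $(1,\ldots,1)$ is a smooth point of $X$, true because it lies in the torus $\tor$, on which $X$ restricts to the smooth variety $\Spec \KK[\ZZ^d]$. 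Everything else — that a minimal generator $u$ of the maximal ideal of a regular local ring satisfies $u^m \notin \m^{m+1}$ for all $m$ — is elementary via the isomorphism $\gr_\m \cO \cong \KK[T_1,\ldots,T_d]$ and the fact that $T_i^m \ne 0$. No characteristic hypothesis is needed here, so the lemma holds over any algebraically closed field; the statement is included with Notation \ref{nota toric} in force purely for bookkeeping.
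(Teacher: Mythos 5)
Your overall strategy matches the paper's: localize at $(1,\ldots,1)$, use that $\cO := \Su_{J_0}$ is a regular local ring, and conclude via $\gr_{\m}\cO \cong \KK[T_1,\ldots,T_d]$ that a minimal generator $u$ of $\m$ satisfies $u^m \notin \m^{m+1}$, hence $(x^{a_i}-1)^m \notin J_0^{n+1}\Su$. That part is fine and is essentially what the paper does.

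The gap is in the step where you assert $x^{a_i}-1 \notin J_0^2$. You justify this by saying ``its class is one of the generators of the $d$-dimensional space $J_0/J_0^2$, and in a regular local ring a minimal generator of the maximal ideal is not in its square.'' But this is circular: there are $s$ elements $x^{a_1}-1,\ldots,x^{a_s}-1$ generating $J_0$, and $J_0/J_0^2$ has dimension $d$. Since $X$ is singular we have $s > d$, so a priori some of these $s$ generators could have zero image in $J_0/J_0^2$ --- being \emph{a} generator does not make one a \emph{minimal} generator, and ``is a minimal generator'' is precisely the thing you need to prove, namely $x^{a_i}-1 \notin J_0^2$. The paper proves this fact directly: since $a_i$ is a primitive vector, some coordinate $a_{ij}$ is nonzero and coprime to $p$; applying $\partial/\partial x_j$ to a hypothetical expression of $x^{a_i}-1$ as an element of $J_0^2$ and evaluating at $(1,\ldots,1)$ gives $a_{ij}\ne 0$ on the left and $0$ on the right, a contradiction. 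This is also where your remark that ``no characteristic hypothesis is needed'' becomes misleading: the statement does hold in all characteristics, but proving $x^{a_i}-1\notin J_0^2$ when $\ch(\KK)=p>0$ genuinely requires choosing a coordinate of $a_i$ not divisible by $p$, which is available exactly because $a_i$ is primitive. Your proposal never invokes primitivity of $a_i$, so this step is not established.
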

\begin{proof}
It is enough to prove the statement for $m=n$. We first show that $x^{a_i}-1\notin J_1$. Since $a_i$ is a primitive vector, there exists $j$ such that $a_{ij}\neq0$ and $p\nmid a_{ij}$. For simplicity of notation assume $i=j=1$. Suppose that $x^{a_1}-1\in J_1$, i.e.,
$$x^{a_1}-1=\sum h_{ij}(x^{a_i}-1)(x^{a_j}-1).$$
Taking derivations with respect to $x_1$ on both sides of this equation, we obtain a non-zero monomial on the left hand, and on the right hand a sum such that every summand has a factor $x^{a_i}-1$. Therefore, evaluating the resulting equation on $(1,\ldots,1)$, we obtain something different from zero on the left hand and zero on the right hand. This is a contradiction.

Let $J_0=\langle x^{a_1}-1,\ldots,x^{a_s}-1\rangle\subseteq\Su$. By definition, $J_n=(J_0)^{n+1}$. The ideal $J_0$ is the maximal ideal corresponding to $(1,\ldots,1)\in X$, which is non-singular. Let $R=\Su_{J_0}$, which is a regular local ring. By the previous paragraph, $x^{a_1}-1 \in J_0 R \setminus (J_0)^2 R$. Then $x^{a_1}-1$ is a linear combination of the minimal generators of $J_0 R$, plus possibly some other terms of higher order. This implies that, for each $n\geq1$, $(x^{a_1}-1)^n\in (J_0)^{n}R\setminus (J_0)^{n+1}R$. It follows that $(x^{a_1}-1)^n\notin J_n$.
\end{proof}

We are now ready to prove Theorem \ref{nob toric}.

\begin{theorem}\label{t. analogue Nobile}
Let $\ch(\KK)=p>0$. Let $X$ be the normal toric variety defined by $\sigma$ and $\pi_n\circ\eta:\Nn\rightarrow X$ be the normalized higher Nash blowup of $X$. If $X$ is singular, then $\Nn\not\cong X$. In particular,  if $\Nash_n(X)\cong X$ for some $n\geq1$, then $X$ is a non-singular variety.
\end{theorem}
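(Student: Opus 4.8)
The plan is to show that when $X$ is singular (equivalently, $\sigma$ is not a regular cone), the Gröbner fan $\GF$ is a strictly finer subdivision of $\sigma$ than $\sigma$ itself, so that by Theorem~\ref{t. Nash = GF} the fan $\Sigma$ of $\Nn$ properly subdivides $\sigma$, hence $\pi_n\circ\eta$ is not injective and $\Nn\not\cong X$. The final assertion then follows: if $\Nash_n(X)\cong X$, then composing with the normalization $\eta$ and using that $X$ is normal forces $\Nn\cong X$, contradicting what we just proved.

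To show $\GF$ properly subdivides $\sigma$, I will produce a single element $h\in J_n$ whose leading term jumps as the weight varies over $\sigma$. Concretely, fix $w$ in the interior of $\sigma$ and a monomial order $>$, and let $G$ be the reduced Gröbner basis of $J_n$ with respect to $>_w$. By the discussion preceding Lemma~\ref{l. h in J_n}, it suffices to find $g\in G$ (or just some $h\in J_n$ together with two weights) with $\init_w(h)\neq\init_{w'}(h)$ for some $w'\in\sigma$. Lemma~\ref{l. h in J_n} hands me exactly the needed $h\in J_n$ and a weight $w$ in the relative interior of $\sigma$ with $\lt_{>_w}(h)=(x^{a_i})^n$ for some edge generator $a_i$, $i\in\{1,\dots,r\}$. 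The point is that $(x^{a_i})^n$ cannot be the leading term of $h$ for \emph{all} $w'\in\sigma$: since $a_i$ spans an edge of $\sd$, there is a facet-supporting functional $w_0\in\sigma$ with $w_0\cdot a_i=0$, so with respect to $w_0$ the monomial $(x^{a_i})^n$ has weight $0$ and cannot be selected over the lower-order terms of $h$ — in particular, the constant term of $h$ would compete, and by Lemma~\ref{l. p<n+1} that constant term is genuinely present (if $h$ were, up to scalar, a power of $x^{a_i}-1$ it would not lie in $J_n$, and more to the point $h$ has a nonzero constant term because otherwise $x^{a_i}\mid h$ forces a contradiction with $h\in J_n$ via Lemma~\ref{l. p<n+1}). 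So $\init_w(h)\neq\init_{w_0}(h)$, or if $w_0$ lies on the boundary one perturbs slightly into the interior as in case (2.1) of Lemma~\ref{l. h in J_n}; either way $\GF$ has at least two maximal cones.

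More carefully, I would argue as follows. Since $h\in J_n$ has $\lt_{>_w}(h)=(x^{a_i})^n$, write $h=c_0(x^{a_i})^n+(\text{lower terms under }>_w)$ with $c_0\neq0$. Every monomial $x^u$ appearing in $h$ satisfies $u\in n\cdot(\sd\cap\ZZ^d)$ up to the grading coming from $J_n=(J_0)^{n+1}$; I only need that there is \emph{some} monomial $x^{u_0}$ in $h$ with $w_0\cdot u_0 < n(w_0\cdot a_i)$ for a suitable $w_0\in\sigma$, so that $(x^{a_i})^n$ is not $>_{w_0}$-leading. Taking $w_0$ in the facet of $\sigma$ with $w_0\cdot a_i=0$ does this as long as $h$ is not a scalar multiple of $(x^{a_i})^n$ itself; but $(x^{a_i})^n\notin J_n$ by Lemma~\ref{l. p<n+1} (applied with $m=n<n+1$, noting $(x^{a_i})^n$ and $(x^{a_i}-1)^n$ generate the same power of $J_0$ locally, or directly that $(x^{a_i})^n\notin J_0$), so $h$ has at least one further term, which has strictly smaller $w_0$-weight, giving the jump. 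Finally I combine: $\GF\neq\{\sigma\}$, hence by Theorem~\ref{t. Nash = GF}, $\Sigma=\GF$ is a nontrivial subdivision, so $\pi_n\circ\eta$ is not injective, so $\Nn\not\cong X$; and since $X$ is normal, if $\Nash_n(X)\cong X$ then the normalization map $\eta$ would be an isomorphism and $\Nn\cong X$, a contradiction. Therefore $\Nash_n(X)\cong X$ implies $X$ smooth.

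The main obstacle, and the place requiring genuine care, is the positive-characteristic input packaged in Lemma~\ref{l. h in J_n}: in characteristic zero one can just take the linear part of a binomial relation among the $a_i$ and all coefficients survive, but in characteristic $p$ these coefficients can vanish, which is why that lemma has the elaborate case analysis (dividing out powers of $p$ from the relation~\eqref{relation}, and the separate treatment of $r=d$ versus $r>d$). In the proof of Theorem~\ref{t. analogue Nobile} itself the delicate point is ensuring the leading-term jump really happens within $\sigma$ — i.e., that the facet functional $w_0$ (or its interior perturbation) genuinely changes $\init(h)$ — which is exactly where Lemma~\ref{l. p<n+1} is needed to guarantee $h$ is not monomial. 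Everything else is bookkeeping with the Gröbner fan and the reduction to $\Nn$ via Theorem~\ref{t. Nash = GF} and normality of $X$.
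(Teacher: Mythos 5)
Your overall strategy — reduce to showing $\GrFan(J_n)$ is a nontrivial subdivision of $\sigma$ via Theorem~\ref{t. Nash = GF}, and produce the needed leading-term jump from Lemmas~\ref{l. h in J_n} and~\ref{l. p<n+1} — is exactly the paper's route. But the core step, where you argue the jump actually occurs, has a genuine gap. You take $w_0$ on the facet of $\sigma$ with $w_0\cdot a_i=0$, so $(x^{a_i})^n$ has $w_0$-weight $0$. For the initial term of $h$ to change, you need a monomial $x^{u_0}$ in $h$ with \emph{strictly larger} $w_0$-weight, i.e.\ $w_0\cdot u_0>0$ (you actually wrote the inequality backwards: $w_0\cdot u_0<n(w_0\cdot a_i)=0$ is impossible since every exponent lies in $\sd$ and $w_0\in\sigma$, so all $w_0$-weights are $\geq 0$). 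The constant term you appeal to has weight $0$, exactly the same as $(x^{a_i})^n$, so it never ``outweighs'' it — having a nonzero constant term, or merely not being a scalar multiple of $(x^{a_i})^n$, is not sufficient. The example $h=(x^{a_i})^n+(x^{a_i})^{n-1}+\cdots$ shows a polynomial can fail to be a monomial and still have all its terms of $w_0$-weight $0$: every such term lies on the edge $\tau=\RR_{\geq 0}a_i$.

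The missing argument is that the relevant element cannot be supported entirely on the edge $\tau$. Concretely: since $(x^{a_i})^n=\lt_{>_w}(h)\in\init_{>_w}(J_n)$, some element $g$ of the reduced Gr\"obner basis $G$ has $\lm(g)=(x^{a_i})^k$, $k\leq n$ (the criterion quoted before Lemma~\ref{l. h in J_n} requires $g\in G$, not an arbitrary $h\in J_n$, so you should pass to $g$). If $\init_{w'}(g)$ were led by $(x^{a_i})^k$ for every $w'\in\sigma$, evaluating at a facet weight $w_0$ with $w_0\cdot a_i=0$ forces every monomial of $g$ to have $w_0$-weight zero, hence to lie on $\tau$; since $a_i$ is primitive, $g=P(x^{a_i})$ for a univariate $P$ of degree $k\leq n$. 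But then factoring $P(t)=(t-1)^\ell Q(t)$ with $Q(1)\neq 0$ and $\ell\leq k\leq n$, and localizing at $J_0$ as in the proof of Lemma~\ref{l. p<n+1}, shows $g\in J_0^\ell R\setminus J_0^{\ell+1}R$, so $g\notin J_0^{n+1}=J_n$ — a contradiction. This is where Lemma~\ref{l. p<n+1} is actually used: not to exclude $h$ being a monomial, but (via exactly the localization argument in its proof) to exclude any polynomial purely in $x^{a_i}$ of degree at most $n$ from lying in $J_n$. Once you have a monomial of $g$ off the edge, the Gr\"obner cone of $w$ omits $w_0$, the fan has at least two maximal cones, and the rest of your reduction to $\Nn\not\cong X$ and then to $\Nash_n(X)\not\cong X$ via normality is fine.
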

\begin{proof}
Using Lemmas \ref{l. h in J_n} and \ref{l. p<n+1},  the theorem follows analogously to the characteristic zero case \cite[Theorem 3.7]{DuarteToric}. 
\end{proof}

\section{The $A_3$-singularity}


Thanks to  Theorem \ref{nob toric}, we may reconsider Yasuda's original question for normal toric varieties: the Nash blowup of order $n$ of a normal toric variety is non-singular for $n$ large enough? 

It was already proved by R. Toh-Yama \cite{Toh} that if $X=\V(xy-z^4)\subseteq\CC^3$, then $\Nash_n(X)$ is singular for all $n$. In this section, we extend this result to prime characteristic. The results presented in this section are based on the work of R. Toh-Yama \cite{Toh}.

\begin{notation}\label{nota a3}
Throughout this section we use the following notation:
\begin{itemize}
\item $\sigma\subseteq\RR^2$ is the cone generated by $(0,1)$ and $(4,-3)$.
\item $\sz:=\sd\cap\ZZ^2=\NN((1,0),(3,4),(1,1))$.
\item $J_n:=\Jn\subseteq\Suk$.
\item $\preceq$ denotes the monomial order on $\Suk$ defined by the matrix 
\begin{equation*}
\left(
\begin{array}{rr}
2 & -1 \\
1 & 1 \\
\end{array}
\right).
\end{equation*}
\item Let $\Gn$ be the reduced and marked Gr\"obner basis of $J_n$ with respect to $\preceq$.
\item Let $\lm(\Gn):=\{\alpha\in\ZZ^2|(g,\alpha)\in\Gn\}$
\item Let $\pol(\Gn):=\{g\in\Suk|(g,\alpha)\in\Gn\}$.
\item $X=\V(xy-z^4)\subseteq \KK^3$ (the $A_3$-singularity).
\end{itemize}
\end{notation}

According to Theorem \ref{t. Nash = GF}, the normalization of the Nash blowup of order $n$ of $X$ is determined by the Gr\"obner fan of $J_n$. It was recently shown that, if $\KK=\CC$,  this Gr\"obner fan contains a non-regular cone for each $n$. This  implies that $\Nash_n(X)$ is singular for every $n$ \cite[Theorem 2.21]{Toh}. We show that the same cone appears in $\GF$ for an arbitrary field.

\begin{definition}[{\cite[Definition 2.4]{Toh}}]\label{def pn}
For $n\in\ZZ^+$, let $\Pnn$ be the subset of $\sz$ consisting of the following points:
\\
For odd $n$, 
\begin{align}
&p_n:=(\tfrac{n+3}{2},0), 							&&   														\notag\\
&q_n^0:=(\tfrac{n+3}{2},1)+\tfrac{n-1}{2}(1,2),	&& q_n^i:=q_n^0-i(1,2)\mbox{ }\mbox{ }	(0\leq i \leq \tfrac{n-1}{2}),	\notag\\
&r_n^0:=q_n^0+(0,1),								&& r_n^j:=r_n^0+j(1,2)\mbox{ }\mbox{ }(0\leq j \leq \tfrac{n-1}{2}),	\notag\\
&s_n:=\tfrac{n+1}{2}(3,4). 						&&															\notag	
\end{align}
For even $n$, 
\begin{align}
&p_n:=(\tfrac{n+2}{2},0), 							&&   														\notag\\
&q_n^0:=(\tfrac{n+2}{2},0)+\tfrac{n}{2}(1,2), 	&& q_n^i:=q_n^0-i(1,2)\mbox{ }\mbox{ }	(0\leq i \leq \tfrac{n-2}{2}),	\notag\\
&r_n^0:=q_n^0+(0,1),								&& r_n^j:=r_n^0+j(1,2)\mbox{ }\mbox{ }(0\leq j \leq \tfrac{n}{2}),	\notag\\
&s_n:=\tfrac{n+2}{2}(3,4). 						&&															\notag	
\end{align}
\end{definition}

The following result is a key ingredient towards proving that $\Nash_n(X)$ is singular for all $n$, in the case $\KK=\CC$.

\begin{proposition}[{\cite[Proposition 2.15]{Toh}}]\label{Mn}
Consider Notation \ref{nota a3} and Definition \ref{def pn}. Assume that $\KK=\CC$. Then, $\lm(\Gn)=\Pnn$.
\end{proposition}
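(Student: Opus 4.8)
The plan is to show that the $\preceq$-initial ideal $\init_\preceq(J_n)\subseteq\Suk$ coincides with the monoid ideal $\langle\Pnn\rangle:=\bigcup_{\alpha\in\Pnn}(\alpha+\sz)$, and that $\Pnn$ is its minimal generating set. Since $\Gn$ is the reduced marked Gr\"obner basis of $J_n$ with respect to $\preceq$, the set $\lm(\Gn)$ is exactly the set of minimal generators of $\init_\preceq(J_n)$, and the monomials of $\Suk$ lying outside $\init_\preceq(J_n)$ form a $\CC$-basis of $\Suk/J_n$. So once $\init_\preceq(J_n)=\langle\Pnn\rangle$ is established, it remains only to observe from the coordinates in Definition~\ref{def pn} that $\alpha-\beta\notin\sz$ for all distinct $\alpha,\beta\in\Pnn$, i.e.\ that $\Pnn$ is an antichain for divisibility in $\sz$; this makes each point of $\Pnn$ a genuine minimal generator, whence $\lm(\Gn)=\Pnn$. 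Throughout it is convenient to note that the vector $w:=(2,-1)$, the first row of the matrix defining $\preceq$, lies in the interior of $\sigma$ and satisfies $w\cdot(1,0)=w\cdot(3,4)=2$ and $w\cdot(1,1)=1$, so $w$ is a strictly positive grading on $\sz$ and $\init_\preceq=\init_{(1,1)}\circ\init_w$; thus one may first pass to $w$-initial forms and only afterwards break ties with the weight $(1,1)$.

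I would establish the equality of ideals by comparing vector-space dimensions. On one side, $(1,1,1)$ is a smooth point of the surface $X$ (it lies in the torus), so $\Spec(\Suk/J_n)=x^{(n)}$ has length $\binom{n+2}{2}$; hence $\dim_\CC\Suk/J_n=\dim_\CC\Suk/\init_\preceq(J_n)=\binom{n+2}{2}$. On the other side, a direct lattice-point count---carried out separately for $n$ odd and $n$ even, reading off the rows of the staircase region $\sz\setminus\langle\Pnn\rangle$ from the coordinates of $p_n$, the $q_n^i$, the $r_n^j$ and $s_n$ in Definition~\ref{def pn}---gives $\lvert\sz\setminus\langle\Pnn\rangle\rvert=\binom{n+2}{2}$ as well. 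Consequently, once the inclusion $\langle\Pnn\rangle\subseteq\init_\preceq(J_n)$ is known, the chain $\binom{n+2}{2}=\dim_\CC\Suk/\init_\preceq(J_n)\le\dim_\CC\Suk/\langle\Pnn\rangle=\binom{n+2}{2}$ forces $\init_\preceq(J_n)=\langle\Pnn\rangle$.

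Thus the crux is the inclusion $\langle\Pnn\rangle\subseteq\init_\preceq(J_n)$: for each $\alpha\in\Pnn$ one must produce an element $g_\alpha\in J_n=\Jn$ with $\lm_\preceq(g_\alpha)=x^\alpha$. The construction uses the three binomials $u-1$, $u^3v^4-1$, $uv-1$ together with the defining relation $u\cdot u^3v^4=(uv)^4$ of the $A_3$ semigroup: for each of the four families $p_n$, $q_n^i$, $r_n^j$, $s_n$ one assembles a monomial multiple of a product of $n+1$ generators of $J_0$ whose top $w$-graded part, after the relation is used to cancel the non-leading monomials of the same $w$-degree, collapses to the single monomial $x^\alpha$, while all remaining monomials have strictly smaller $w$-degree (or equal $w$-degree but strictly smaller $(1,1)$-value, which is where the tie-break must be checked). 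This is precisely the step in which $\ch\KK=0$ is essential: expanding the relevant powers of $u-1$, $u^3v^4-1$, $uv-1$ produces coefficients that are (sums, differences and products of) binomial coefficients built from $n+1$, and one needs these to be nonzero so that the intended leading monomial survives---which is exactly what can fail in positive characteristic.

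Assembling the three steps yields $\init_\preceq(J_n)=\langle\Pnn\rangle$, and the antichain observation then gives $\lm(\Gn)=\Pnn$. (Alternatively one can run the whole argument by induction on $n$ via $J_n=J_0\cdot J_{n-1}$, relating $\Pnn$ to $\Pnu$ through the parity shift in Definition~\ref{def pn}, which trades the global dimension count for a Buchberger-type step but still rests on the same characteristic-zero non-vanishing.) The main obstacle, and the only genuinely laborious part, is the explicit construction in the previous paragraph: exhibiting, for every point of each of the four families and for both parities of $n$, an element of $J_0^{\,n+1}$ with the prescribed $\preceq$-leading monomial and verifying that all its other monomials are $\preceq$-smaller.
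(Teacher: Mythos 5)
Your overall architecture is sound, and it is in fact the same two-pronged strategy this paper itself deploys later for the positive-characteristic transfer: (a) a dimension count identifying $\dim_\KK\Suk/\init_{\preceq}(J_n)$ with $\dim_\KK\Suk/\langle\Pnn\rangle$ (this is exactly Lemma \ref{dim}, resting on $\dim\Suk/J_n=\binom{n+2}{2}$ and the staircase count $|\sz\setminus(\Pnn+\sz)|=\tfrac12(n+1)(n+2)$), and (b) the exhibition, for every $\alpha\in\Pnn$, of an element of $J_n=\Jn$ with $\preceq$-leading monomial $x^\alpha$, which forces $\init_{\preceq}(J_n)=\langle\Pnn\rangle$ and hence $\lm(\Gn)=\Pnn$. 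Note that the paper does not prove Proposition \ref{Mn} at all --- it is quoted from Toh-Yama --- but its Lemmas \ref{gn}, \ref{hn} and \ref{groeb} carry out precisely step (b) over any field, and combined with Lemma \ref{dim} they would complete your plan.

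The genuine gap is that you never carry out step (b), and that is where essentially all of the content of the proposition lives. Saying that one ``assembles a monomial multiple of a product of $n+1$ generators of $J_0$ whose top $w$-graded part collapses to $x^\alpha$'' is a description of what must be true, not a construction: for the family $s_n$ one can simply take powers of $g_1=u^3v^4+u-4uv+2$ (Lemma \ref{gn}), but already for $p_n$ with $n$ even the paper needs the recursion $h_n=g_1h_{n-2}-(uv-1)^4h_{n-4}$ together with an induction controlling the two largest terms (Lemma \ref{hn}), and the families $q_n^i$, $r_n^j$ are produced by the recursive sets $G_n$ of Lemma \ref{groeb}, whose leading monomials are matched to $\Pnn$ via the shift $\theta$ of Lemma \ref{theta}. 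None of this is routine bookkeeping, and without it your inclusion $\langle\Pnn\rangle\subseteq\init_{\preceq}(J_n)$ is unproved; the staircase count $|\sz\setminus(\Pnn+\sz)|=\binom{n+2}{2}$ and the antichain property of $\Pnn$ are likewise asserted rather than checked (the former is a separate lemma in Toh-Yama's paper). A smaller point: your closing remark that characteristic zero is ``essential'' to the non-vanishing of the relevant coefficients is misleading --- the whole point of Lemmas \ref{gn}--\ref{groeb} and Propositions \ref{Gn Z} and \ref{mod} is that the witnesses can be chosen in $\Suz$ with leading coefficient $1$, so the leading terms survive reduction mod $p$; nothing in step (b) genuinely needs $\KK=\CC$, though of course this does not affect the correctness of the statement you were asked to prove.
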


We also need the following property of the set $\Pnn$, which is very useful for inductive arguments.

\begin{lemma}[{\cite[Lemma 2.6]{Toh}}]\label{theta}
Consider Notation \ref{nota a3} and Definition \ref{def pn}. Let $\theta:\sz\to\sz$, $a\mapsto a+(1,1)$. Then, for $n\in\NN$, $n\geq2$,
\[\Pnn=
\left\{
\begin{array}{rll}
&\theta(\Pnu\setminus\{p_{n-1}\})\sqcup\{p_n,s_n\}, & n \mbox{ even},\\
&\theta(\Pnu\setminus\{s_{n-1}\})\sqcup\{p_n,s_n\}, & n \mbox{ odd}.
\end{array}
\right.
\]
\end{lemma}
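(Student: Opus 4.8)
The statement is a purely combinatorial identity about finite subsets of $\sz$, so I would prove it by unwinding Definition \ref{def pn} and doing a careful bookkeeping of the points, splitting into the two parity cases. The map $\theta$ is just translation by $(1,1)$, so the content of the lemma is: after deleting one distinguished point from $\Pnu$, translating by $(1,1)$, and adding back the two points $p_n$ and $s_n$, one recovers $\Pnn$. The natural strategy is to track each of the four families of points ($p$, the $q^i$'s, the $r^j$'s, and $s$) separately under $\theta$ and compare with the explicit formulas for level $n$.

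First I would fix the parity of $n$ and write down, from Definition \ref{def pn}, the explicit coordinates of every point of $\Pnu$ and of $\Pnn$. Then I would compute $\theta$ of each point of $\Pnu$: for instance, when $n$ is even, $n-1$ is odd, so $q_{n-1}^0 = (\tfrac{n+2}{2},1)+\tfrac{n-3}{2}(1,2)$ (using $\tfrac{(n-1)+3}{2}=\tfrac{n+2}{2}$), hence $\theta(q_{n-1}^0) = (\tfrac{n+2}{2},0)+\tfrac{n-1}{2}(1,2)$, and one checks this equals $q_n^0$ for the even-$n$ formula; similarly $\theta(q_{n-1}^i)=q_n^{i}$ for the relevant range of $i$, $\theta(r_{n-1}^j)=r_n^{j}$, and $\theta(p_{n-1})$, $\theta(s_{n-1})$ land on specific lattice points. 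The key observations to verify are: (i) in the even case, $\theta(p_{n-1})$ coincides with the new point $q_n^{(n-2)/2}$ (the "bottom" of the $q$-ladder), which is why $p_{n-1}$ must be removed before translating and a fresh $p_n=(\tfrac{n+2}{2},0)$ added; (ii) in the odd case, $\theta(s_{n-1})$ coincides with $r_n^{(n-1)/2}$ (the "top" of the $r$-ladder), so $s_{n-1}$ is removed and a fresh $s_n=\tfrac{n+1}{2}(3,4)$ added; (iii) the two newly added points $p_n$ and $s_n$ are genuinely not in the image $\theta(\Pnu)$, so the union is disjoint as claimed by the $\sqcup$ symbol. Conversely, every point of $\Pnn$ is accounted for: $p_n$ and $s_n$ are the new ones, and all $q_n^i$ and $r_n^j$ are images under $\theta$ of the corresponding points one level down, with the index ranges matching up exactly (here one must be careful that the ranges $0\le i\le\tfrac{n-2}{2}$ versus $0\le i\le\tfrac{n-1-1}{2}$, etc., are consistent after the shift).

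I would then repeat the same verification for odd $n$, where the roles of the endpoints swap: now $\theta$ applied to the $q$-ladder of $\Pnu$ (with $n-1$ even) reproduces the $q$-ladder of $\Pnn$ together with one extra point that turns out to be $p_n$ is \emph{not} hit — rather it is $s_{n-1}$ whose image overlaps the $r$-ladder — so $s_{n-1}$ is the one removed. All of this is elementary arithmetic with the vectors $(1,0)$, $(3,4)$, $(1,1)$, $(1,2)$; the only place care is needed is keeping the index ranges and the floor/ceiling-type quantities $\tfrac{n\pm1}{2}$, $\tfrac{n\pm2}{2}$ straight across the parity switch, and confirming that no two of the listed points of $\Pnn$ accidentally coincide (so that the cardinalities match and the disjoint-union claim is meaningful). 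I do not expect a genuine obstacle here: the lemma is a lemma precisely because it is a routine but slightly fiddly index computation, and the main "risk" is a sign or off-by-one error, which the explicit coordinate check eliminates.
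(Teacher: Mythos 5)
Your overall strategy---unwind Definition \ref{def pn}, apply $\theta$ coordinate-by-coordinate, and track the four families ($p$, $q$-ladder, $r$-ladder, $s$) across the parity switch---is the right one, and indeed the only sensible one for an identity of this kind (the paper itself does not prove the lemma; it cites Toh-Yama). However, the two ``key observations'' (i) and (ii) around which you organize the argument are both false, and fixing them inverts the mechanism you describe.

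Take $n$ even, so $n-1$ is odd. Then $p_{n-1}=(\tfrac{n+2}{2},0)$ and $\theta(p_{n-1})=(\tfrac{n+4}{2},1)$, while $q_n^{(n-2)/2}=q_n^0-\tfrac{n-2}{2}(1,2)=(\tfrac{n+4}{2},2)$; these differ in the second coordinate. In fact $\theta(p_{n-1})$ lies in \emph{no} point of $\Pnn$ (every $q_n^i$ has even second coordinate $\geq 2$, every $r_n^j$ has second coordinate $\geq n+1$, and $p_n,s_n$ have second coordinates $0$ and $2n+4$), and \emph{that} is why $p_{n-1}$ must be deleted before translating. The point that actually supplies the ``extra'' endpoint of the $r$-ladder is the one you do \emph{not} delete: $\theta(s_{n-1})=\tfrac{n}{2}(3,4)+(1,1)=(\tfrac{3n+2}{2},2n+1)=r_n^{n/2}$. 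Symmetrically, for $n$ odd one computes $\theta(s_{n-1})=(\tfrac{3n+5}{2},2n+3)$, which is not $r_n^{(n-1)/2}=(\tfrac{3n+1}{2},2n)$ and in fact lies outside $\Pnn$ entirely---hence the deletion of $s_{n-1}$---while it is $\theta(p_{n-1})=(\tfrac{n+3}{2},1)=q_n^{(n-1)/2}$ that covers the extra $q$-ladder endpoint. (Note also that the mechanism you propose is internally inconsistent: if $\theta(p_{n-1})$ coincided with a point of $\Pnn$ that is also the $\theta$-image of some $q_{n-1}^i$, then $\theta$ would fail to be injective, which a translation cannot do.) Separately, your intermediate formula $q_{n-1}^0=(\tfrac{n+2}{2},1)+\tfrac{n-3}{2}(1,2)$ has the wrong coefficient (it should be $\tfrac{n-2}{2}$, since $\tfrac{(n-1)-1}{2}=\tfrac{n-2}{2}$), and your claimed $\theta(q_{n-1}^0)=(\tfrac{n+2}{2},0)+\tfrac{n-1}{2}(1,2)$ is not even a lattice point for even $n$. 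These are precisely the ``off-by-one'' slips you warn about; in a proof whose entire content is the coordinate check, they need to be done correctly, and as written the proposal's core explanatory claims do not hold.
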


Our first goal is to show that $\pol(\Gn)\subseteq\Suz$ in the case $\KK=\CC$. In order to prove this fact we need the following three lemmas.

\begin{lemma}\label{gn}
Consider Notation \ref{nota a3} and Definition \ref{def pn}. Let $n\in\NN$ be odd. There exists $g_n\in J_n\cap\Suz$ such that $\lc(g_n)=1$ and $\lm(g_n)=s_n$.
\end{lemma}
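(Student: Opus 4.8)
The plan is to construct $g_n$ explicitly by an inductive procedure mirroring Lemma~\ref{theta}, which expresses $\Pnn$ in terms of $\Pnu$ via the shift $\theta(a)=a+(1,1)$. Concretely, for odd $n$ the point $s_n=\tfrac{n+1}{2}(3,4)$ is a ``new'' lattice point produced when passing from $\Pnu$ (even index $n-1$) to $\Pnn$: indeed $s_n = s_{n-1} + (3,4)$ and $s_{n-1}=\tfrac{n+1}{2}(3,4)$ already lies in $\Pnu$. So I expect the natural candidate to be something like $g_n = (x^{a_2}-1)\cdot g_{n-1}'$ where $g_{n-1}'$ is a suitable element of $J_{n-1}\cap\Suz$ with leading monomial $s_{n-1}$, multiplied by the binomial $x^{a_2}-1$ corresponding to the ray generator $a_2=(3,4)$; multiplying by $x^{a_2}-1\in J_0$ raises the $J$-adic order by one (so $J_{n-1}\cdot J_0 = J_n$) and shifts the leading monomial by $a_2=(3,4)$, taking $s_{n-1}$ to $s_n$. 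The base case is $n=1$: here $s_1=(3,4)$ and one exhibits $g_1$ by hand, e.g. as an explicit $\ZZ$-combination of products of two binomials $x^{a_i}-1$ lying in $J_1=(J_0)^2$, whose leading term under $\preceq_{?}$ (the order from Notation~\ref{nota a3}) is $x^{(3,4)}$ with coefficient $1$.

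**Key steps, in order.**
\begin{itemize}
\item[(1)] Establish the base case $n=1$: write down an explicit element of $J_1\cap\Suz$ with $\lc=1$ and $\lm=s_1=(3,4)$. This requires identifying which quadratic products of the generators $x^{(1,0)}-1,\ x^{(3,4)}-1,\ x^{(1,1)}-1$ of $J_0$, combined with integer coefficients, produce a polynomial whose $\preceq$-largest monomial is exactly $x^{(3,4)}$ (and lies above all cancellation), using the matrix order $\left(\begin{smallmatrix}2&-1\\1&1\end{smallmatrix}\right)$ to check dominance.
\item[(2)] Inductive step: assuming the statement holds for the odd index $n-2$ with witness $g_{n-2}\in J_{n-2}\cap\Suz$, first pass from $n-2$ to $n-1$ (even) — producing an element of $J_{n-1}\cap\Suz$ with leading monomial $s_{n-1}$ — by multiplying $g_{n-2}$ by an appropriate binomial $x^{a_i}-1$ so that the shift carries $s_{n-2}$ to $s_{n-1}$, using that the relevant $s$-points differ by a generator of $\sz$ (as in Lemma~\ref{theta}); then repeat once more to go from $n-1$ to $n$, landing at $s_n$. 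Throughout, one checks that multiplying by $x^{a_i}-1$ keeps the coefficient ring $\ZZ$, keeps the leading coefficient $1$, and keeps membership in the correct power of $J_0$.
\item[(3)] Verify the leading-monomial bookkeeping: confirm that after each multiplication the $\preceq$-leading monomial is exactly the shift of the previous leading monomial (no new monomial of higher order appears, and no cancellation destroys it), so that the final leading monomial is $s_n$ as required by Definition~\ref{def pn}.
\end{itemize}

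**Main obstacle.**
The hard part will be step~(3), the leading-term control: multiplying $g$ by a binomial $x^{a_i}-1$ produces $x^{a_i}g - g$, and while $\lm(x^{a_i}g) = a_i + \lm(g)$, one must rule out that subtracting $g$ (or interactions among lower-order terms) yields a monomial that is $\preceq$-larger than $a_i+\lm(g)$, which would break the claim $\lm(g_n)=s_n$. This is exactly the kind of place where the characteristic-zero argument of \cite{Toh} implicitly uses that coefficients do not vanish; here, since we work inside $\Suz$ and only claim $\lc(g_n)=1$, the point is instead to choose the multiplying binomial $x^{a_i}-1$ (equivalently, the path of $s$-points) so that $a_i+\lm(g_{n-1})$ dominates every other monomial present — I expect this to follow from the explicit geometry of the points $p_n,q_n^i,r_n^j,s_n$ together with the explicit matrix defining $\preceq$, but the verification is where the genuine computation lives. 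A secondary subtlety is ensuring the construction stays in $\Suz$ rather than merely $\Suk$, which is automatic once the base case is arranged over $\ZZ$ and one only ever multiplies by binomials $x^{a_i}-1\in\Suz$ and takes $\ZZ$-linear combinations.
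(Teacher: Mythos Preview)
Your inductive plan has a concrete arithmetic gap. For odd $n$ you correctly compute $s_{n-2}=\tfrac{n-1}{2}(3,4)$ and, for the intermediate even index, $s_{n-1}=\tfrac{n+1}{2}(3,4)$; the first multiplication by the binomial $u^3v^4-1\in J_0$ indeed carries $g_{n-2}\in J_{n-2}$ to an element of $J_{n-1}$ with leading monomial $s_{n-1}$. But the second step fails: for odd $n$ one has $s_n=\tfrac{n+1}{2}(3,4)=s_{n-1}$, so the required shift is zero, whereas multiplying by any binomial $x^{a_i}-1$ forces a strictly positive shift $a_i\in\{(1,0),(3,4),(1,1)\}$ in the leading monomial (each $a_i$ is $\succ 0$ under the matrix order). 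Equivalently, going straight from $n-2$ to $n$ you would need two binomial multiplications whose shifts sum to $s_n-s_{n-2}=(3,4)$, and no pair drawn from $\{(1,0),(3,4),(1,1)\}$ sums to $(3,4)$. So the ``path through $s$-points via generator shifts'' that you sketch does not exist; your claim $s_n=s_{n-1}+(3,4)$ in the plan is the symptom of this miscount.

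The paper's proof sidesteps the whole issue with a one-line construction: once you have the base element $g_1=u^3v^4+u-4uv+2\in J_1\cap\Suz$ with $\lm(g_1)=(3,4)$ and $\lc(g_1)=1$, simply set $g_n:=g_1^{(n+1)/2}$. Then $g_n\in (J_1)^{(n+1)/2}=(J_0)^{n+1}=J_n$, it lies in $\Suz$, and since $\preceq$ is a monomial order the leading term of a power is the power of the leading term, so $\lm(g_n)=\tfrac{n+1}{2}(3,4)=s_n$ with $\lc(g_n)=1$. Note that this is morally what your induction wants to do, but multiplying by $g_1\in J_1$ (rather than by two separate binomials in $J_0$) is exactly what allows a single shift of $(3,4)$ while jumping two steps in the $J$-filtration; with this replacement your step~(2) works and step~(3) becomes automatic.
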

\begin{proof}
We know that $g_1:=u^3v^4+u-4uv+2\in J_1\cap\Suz$, $\lc(g_1)=1$, and $\lm(g_1)=(3,4)=s_1$ \cite[Proposition 2.13]{Toh}. Now define, for $n\geq3$, $g_n:=(g_1)^{\frac{n+1}{2}}$. Then $g_n\in(J_1)^{\frac{n+1}{2}}=J_n$, $g_n\in\Suz$, $\lc(g_n)=1$, and $\lm(g_n)=\frac{n+1}{2}(3,4)=s_n$.
\end{proof}

\begin{lemma}\label{hn}
Consider Notation \ref{nota a3} and Definition \ref{def pn}. Let $n\in\NN$ be even. There exists $h_n\in J_n\cap\Suz$ such that $\lc(h_n)=1$ and $\lm(h_n)=p_n$.
\end{lemma}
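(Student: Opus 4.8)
The plan is to mirror the construction in Lemma \ref{gn}, but adapted to the parity that is now even. In Lemma \ref{gn} the odd case was handled by taking a known generator $g_1 \in J_1 \cap \Suz$ with $\lc(g_1)=1$ and $\lm(g_1)=s_1$, and then raising it to the power $\tfrac{n+1}{2}$; since $J_n = (J_1)^{(n+1)/2}$ for odd $n$, this lands in $J_n$, stays in $\Suz$, keeps leading coefficient $1$, and has leading monomial $\tfrac{n+1}{2}(3,4)=s_n$. For even $n$ we need the analogous ``seed'' in low degree with the right leading monomial, namely one whose leading exponent is $p_n = (\tfrac{n+2}{2},0)$.

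**The key observation** is that $p_n$ depends only on the parity-normalized quantity $\tfrac{n+2}{2}$, and that $p_2 = (2,0) = 2\,p_0$-type behavior suggests $p_n$ is a multiple of the $n=0$ (or $n=2$) base point. Concretely, I expect there is a polynomial $h_2 \in J_2 \cap \Suz$ with $\lc(h_2)=1$ and $\lm(h_2) = p_2 = (2,0)$, coming either directly from Toh-Yama's explicit description of a low-degree element of $J_2$ (analogous to \cite[Proposition 2.13]{Toh} for $J_1$), or by combining $g_1$ with the binomials $u-1$, $u^3v^4-1$, $uv-1$. Indeed, since $\lm(g_1)=(3,4)$ and $u^3v^4 - 1 \in J_0$, one can try to form a product or combination of $g_1$ and binomial generators of $J_0$ that cancels the leading term $u^3v^4$ and produces a new leading term supported at $(2,0)$. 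Once such an $h_2$ is in hand, I would set, for even $n \geq 4$,
\[
h_n := h_2 \cdot (g_1)^{\frac{n-2}{2}},
\]
which lies in $(J_2)(J_1)^{(n-2)/2} \subseteq (J_1)^{(n+2)/2 - 1}\cdot(J_1) \cdots$; more carefully, $J_2 \cdot (J_1)^{(n-2)/2} = (J_1)^3 (J_1)^{(n-2)/2} = (J_1)^{(n+4)/2}$, and since $J_n = (J_1)^{n+1}$ one must check the exponent matches. Here I would instead use $J_n = (J_0)^{n+1}$ directly and the fact that $J_2 = (J_0)^3$, $J_{n-2}=(J_0)^{n-1}$, so $h_2 \cdot g_{n-2}$ (if $n-2$ were odd, which it is when $n$ is even only if... no — $n$ even forces $n-2$ even), so instead take $h_n := h_2 \cdot (h_{n-2}/\text{base})$... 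The cleanest route: define $h_n := h_2 \cdot (g_1^2)^{(n-2)/2}$, noting $g_1^2 \in (J_1)^2 = (J_0)^4$ and hence $h_2 \cdot (g_1^2)^{(n-2)/2} \in (J_0)^3 \cdot (J_0)^{2(n-2)} $ — the bookkeeping must be arranged so the total power of $J_0$ is exactly $n+1$; since $\deg$-tracking of these ideals is delicate, I would ultimately phrase $h_n := h_2 \cdot g_1^{\,n-2}$ when that gives $(J_0)^{3}\cdot(J_0)^{2(n-2)}$-membership inside $J_n=(J_0)^{n+1}$, which holds once $3 + 2(n-2) \geq n+1$, i.e. $n \geq 2$. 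Leading data then multiply: $\lc(h_n) = \lc(h_2)\lc(g_1)^{n-2} = 1$ and $\lm(h_n) = p_2 + (n-2)s_1 = (2,0) + (n-2)(3,4)$, which is \emph{not} $p_n$ in general, so the naive product fails and one must instead use binomials to reduce the exponent, exactly as $g_n$ in the odd case used only powers of $g_1$ because $s_n$ scaled linearly.

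**The main obstacle** is therefore producing the correct low-degree seed $h_2$ (or more precisely an element of $J_n \cap \Suz$ with leading monomial \emph{exactly} $p_n = (\tfrac{n+2}{2},0)$ rather than something that drifts in the $(3,4)$ direction). I expect this is handled by an explicit computation in $\Suz$: write down a concrete polynomial — presumably the one Toh-Yama exhibits in the even-degree analog of \cite[Proposition 2.13]{Toh} — verify membership in $J_n$ by exhibiting it as an explicit $\Suz$-combination of products of $n+1$ of the binomials $u-1,\ u^3v^4-1,\ uv-1$, and then read off $\lc$ and $\lm$ against the order $\preceq$ given by the matrix $\left(\begin{smallmatrix}2&-1\\1&1\end{smallmatrix}\right)$. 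The verification that the leading monomial under $\preceq$ is precisely $p_n$ is the one place a characteristic-$p$ subtlety could enter — a coefficient that is nonzero over $\CC$ might vanish mod $p$ — so I would be careful to check that the coefficient of the $p_n$-monomial is a unit in $\ZZ_p$ (indeed $\pm 1$) and hence survives in every characteristic, which is the whole point of keeping everything in $\Suz$. Once $h_2$ (equivalently, a correctly-normalized $h_n$) is established, the rest is immediate: $h_n \in J_n \cap \Suz$, $\lc(h_n)=1$, $\lm(h_n)=p_n$, completing the lemma.
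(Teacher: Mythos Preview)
Your proposal has a genuine gap. You correctly identify that a seed $h_2\in J_2\cap\Suz$ with $\lc(h_2)=1$ and $\lm(h_2)=p_2=(2,0)$ is needed, and you correctly observe that the naive product $h_2\cdot g_1^{(n-2)/2}$ fails because its leading exponent is $(2,0)+\tfrac{n-2}{2}(3,4)$, which drifts in the $(3,4)$ direction rather than landing on $p_n=(\tfrac{n+2}{2},0)$. But having diagnosed the obstacle, you do not actually overcome it: the rest of the proposal is a hope that ``an explicit computation'' or ``binomials to reduce the exponent'' will do the job, without specifying what that computation is. That is precisely the nontrivial content of the lemma.

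The paper's construction is a two-term linear recursion with a cancellation, not a multiplicative one. After writing down $h_2$ explicitly and checking $h_4:=g_1h_2-(u-1)(uv-1)^4$ and $h_6,h_8$ by hand, one defines for even $n\geq 10$
\[
h_n:=g_1\,h_{n-2}-(uv-1)^4\,h_{n-4}.
\]
The point is that $g_1h_{n-2}$ has leading term $u^{k+4}v^4$ (with $n=2k$) and second term $u^{k+2}$, while $(uv-1)^4h_{n-4}$ has the same leading term $u^{k+4}v^4$; the subtraction kills $u^{k+4}v^4$ and promotes $u^{k+2}=u^{(n+2)/2}$ to be the new leading monomial, with coefficient $1$. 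An induction on $k$ tracking the first three terms of $h_n$ makes this rigorous. Membership in $J_n$ follows because $g_1h_{n-2}\in J_1\cdot J_{n-2}\subseteq J_n$ and $(uv-1)^4h_{n-4}\in (J_0)^4\cdot J_{n-4}\subseteq J_n$, and integrality is clear. None of this is visible from a pure product ansatz; the recursion with cancellation is the missing idea.
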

\begin{proof}
For $n=2$,  we take 
\begin{align}
h_2:&=u^2-4u^2v-u^3v^4+6u^2v^2+u-4uv+1\notag\\
&=(-1)(u-1)^2(u^3v^4-1)+(u^2v^2+2uv+3)(u-1)(uv-1)^2+(-uv-3)(uv-1)^3.\notag
\end{align}
Then $h_2$ satisfies the conditions of the lemma. From the previous lemma we know that $g_1=u^3v^4+u-4uv+2\in J_1$ and $\lm(g_1)=s_1$. With a computer algebra system one can check that the following polynomials satisfy the conditions of the lemma:
\begin{align}
h_4&:=g_1h_2-(u-1)(uv-1)^4=u^3-4u^5v^5+(\mbox{smaller terms}),\notag\\
h_6&:=g_1h_4-(uv-1)^4h_2=u^4-4u^8v^9+(\mbox{smaller terms}),\notag\\
h_8&:=g_1h_6-(uv-1)^4h_4=u^5-4u^{11}v^{13}+(\mbox{smaller terms}).\notag
\end{align}
Now define $h_n:=g_1h_{n-2}-(uv-1)^4h_{n-4}$ for $n\geq 10$. Assume $n=2k$. We claim:
\begin{itemize}
\item[(i)] $g_1h_{n-2}=u^{k+3}v^4+u^{k+1}-4u^{5+3(k-2)}v^{5+4(k-2)}+(\mbox{smaller terms})$.
\item[(ii)] $(uv-1)^4h_{n-4}=u^{k+3}v^4-4u^{4+5+3(k-4)}v^{4+5+4(k-4)}+(\mbox{smaller terms})$.
\item[(iii)] $h_n=u^{k+1}-4u^{5+3(k-2)}v^{5+4(k-2)}+(\mbox{smaller terms})$.
\end{itemize}
We proceed by induction on $k$. The cases $k=5,6$ can be checked with any computer algebra system. Assume the result is true for $k\geq 6$. Then, we have that
\begin{align}
g_1h_n&=(u^3v^4+u-4uv+2)(u^{k+1}-4u^{5+3(k-2)}v^{5+4(k-2)}+(\mbox{smaller terms})\notag\\
&=u^{k+4}v^4-4u^{5+3(k-1)}v^{5+4(k-1)}+u^{k+2}-4u^{1+5+3(k-2)}v^{5+4(k-2)}+(\mbox{smaller terms})\notag
\end{align}
for $k+1$. A direct computation shows that 
$$(k+4,4)\succeq(k+2,0)\succeq(5+3(k-1),5+4(k-1))\succeq(1+5+3(k-2),5+4(k-2)).$$ 
Therefore,
$$g_1h_n=u^{k+4}v^4+u^{k+2}-4u^{5+3(k-1)}v^{5+4(k-1)}+(\mbox{smaller terms}).$$
Thus, (i) holds. In exactly the same way (ii) and (iii) can be verified. Finally, we note that for $n\geq10$, $n=2k$, by definition and the claim, $h_n\in J_n\cap\Suz$, $\lc(h_n)=1$ and $\lm(h_n)=(k+1,0)=(\frac{n+2}{2},0)=p_{n}$.
\end{proof}

In the following result we use the notation from the two previous lemmas.

\begin{lemma}\label{groeb}
Consider Notation \ref{nota a3} and Definition \ref{def pn}. Let
$$
G_1:=\{g_1,(uv-1)^2,(u-1)(uv-1),(u-1)^2\},
$$
and
$$
G_2:=\{(uv-1)f|f\in G_1\setminus\{(u-1)^2\}\}\cup\{h_2,(u^3v^4-1)g_1\}.
$$

Now define recursively
$$
n \mbox{ odd: } G_n:=\{(uv-1)f|f\in G_{n-1}\setminus\{(u^3v^4-1)g_{n-2}\}\}\cup\{(u-1)h_{n-1},g_n\},
$$
and 
$$
n \mbox{ even: } G_n:=\{(uv-1)f|f\in G_{n-1}\setminus\{(u-1)h_{n-2}\}\}\cup\{(h_n,(u^3v^4-1)g_{n-1}\}.
$$
Then, for each $n\geq1$, $G_n\subseteq J_n\cap\Suz$, the elements of $G_n$ have all leading coefficient 1, and $\lm(G_n)=\Pnn$.
\end{lemma}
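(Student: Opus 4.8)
The plan is to prove all three claims simultaneously by induction on $n$, exploiting the recursive structure of both the sets $G_n$ and the leading-monomial sets $\Pnn$ given in Lemma \ref{theta}. First I would dispose of the base cases $n=1,2$: that $G_1, G_2\subseteq J_n\cap\Suz$ is clear since each listed generator is visibly a product/combination of the binomials $x^{a_i}-1$ (for $G_2$ one uses the explicit expression for $h_2$ from Lemma \ref{hn}), all leading coefficients are $1$ by inspection together with Lemmas \ref{gn} and \ref{hn}, and $\lm(G_1)=\mathcal{P}_1$, $\lm(G_2)=\mathcal{P}_2$ can be checked directly against Definition \ref{def pn}. The membership $G_n\subseteq J_n\cap\Suz$ for the inductive step is the easy part: every generator of $G_n$ is either $(uv-1)$ times a generator of $G_{n-1}\subseteq J_{n-1}$ (hence in $J_0\cdot J_{n-1}=J_n$), or one of the new generators $g_n, h_n, (u-1)h_{n-1}, (u^3v^4-1)g_{n-1}$, each of which lies in $J_n\cap\Suz$ by Lemmas \ref{gn}, \ref{hn} and the fact that $u-1, uv-1, u^3v^4-1\in J_0$; and $\Suz$-membership is preserved throughout since all the building blocks have integer coefficients.

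The substantive content is the leading-monomial count $\lm(G_n)=\Pnn$. Here I would track exactly how the two operations in Lemma \ref{theta} are realized on the level of generators. Multiplication by $uv-1$ (whose leading monomial is $(1,1)$) sends $\lm(f)$ to $\lm(f)+(1,1)=\theta(\lm(f))$, provided no cancellation occurs in the leading term — which holds because $\lc(f)=1$ and $\lc(uv-1)=1$ under the order $\preceq$, and one checks $(1,1)=\lm(uv-1)$ against the matrix defining $\preceq$. So the set $\{(uv-1)f : f\in G_{n-1}\setminus\{\text{one element}\}\}$ contributes exactly $\theta(\lm(G_{n-1})\setminus\{\star\})=\theta(\mathcal{P}_{n-1}\setminus\{\star\})$, where the removed element $\star$ is $\lm((u^3v^4-1)g_{n-2})=s_{n-1}$ when $n$ is odd and $\lm((u-1)h_{n-2})=p_{n-1}$ when $n$ is even — matching precisely the elements removed in Lemma \ref{theta}. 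Then the two new generators contribute their leading monomials: for $n$ odd, $\lm(g_n)=s_n$ (Lemma \ref{gn}) and $\lm((u-1)h_{n-1})=p_{n-1}+(1,0)=p_n$ (using Lemma \ref{hn} and $\lc(u-1)=1$); for $n$ even, $\lm(h_n)=p_n$ (Lemma \ref{hn}) and $\lm((u^3v^4-1)g_{n-1})=s_{n-1}+(3,4)=s_n$ (using Lemma \ref{gn}). Comparing with Lemma \ref{theta} gives $\lm(G_n)=\theta(\mathcal{P}_{n-1}\setminus\{\star\})\sqcup\{p_n,s_n\}=\mathcal{P}_n$.

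The main obstacle I anticipate is bookkeeping: verifying that the element deleted from $G_{n-1}$ in the recursion is genuinely present in $G_{n-1}$ (so the recursion is well-defined) and that its leading monomial is the correct one, which requires carrying along in the induction hypothesis not just $\lm(G_{n-1})=\mathcal{P}_{n-1}$ as a set but a bijection between generators and their leading monomials — in particular knowing which generator of $G_{n-1}$ has leading monomial $s_{n-1}$ (resp.\ $p_{n-1}$). I would therefore strengthen the inductive statement to record that $(u^3v^4-1)g_{n-2}\in G_{n-1}$ with leading monomial $s_{n-1}$ (for the passage to odd $n$) and $(u-1)h_{n-2}\in G_{n-1}$ with leading monomial $p_{n-1}$ (for the passage to even $n$), both of which are immediate from the displayed recursion one step earlier. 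A secondary point requiring care is confirming that the unions in the definitions of $G_n$ are disjoint on leading monomials — i.e.\ that $p_n$ and $s_n$ are not already in $\theta(\mathcal{P}_{n-1}\setminus\{\star\})$ — but this follows from the disjointness asserted in Lemma \ref{theta} together with the explicit coordinates in Definition \ref{def pn}. With these strengthenings the induction closes and all three assertions hold for every $n\geq1$.
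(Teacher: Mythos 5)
Your proposal is correct and follows essentially the same route as the paper: the membership $G_n\subseteq J_n\cap\Suz$ and leading-coefficient claims are handled directly from the construction together with Lemmas \ref{gn} and \ref{hn}, and the equality $\lm(G_n)=\Pnn$ is proved by induction using the identity $\lm((uv-1)f)=\lm(f)+(1,1)=\theta(\lm(f))$ and Lemma \ref{theta}, with the same case split on parity and the same identification of the removed element's leading monomial as $s_{n-1}$ (odd $n$) or $p_{n-1}$ (even $n$). The extra bookkeeping you flag (confirming the deleted generator is actually present in $G_{n-1}$ and that the unions are disjoint on leading monomials) is a sensible precaution that the paper leaves implicit.
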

\begin{proof}
By construction and Lemmas \ref{gn} and  \ref{hn}, $G_n\subseteq J_n\cap\Suz$ and its elements have all leading coefficient 1. It remains to prove that $\lm(G_n)=\Pnn$. 

Let $n=1$. By Lemma \ref{gn}, $\lm(g_1)=s_1$. The leading monomials of the other three elements of $G_1$ coincides with the remaining elements of $\mathcal{P}_1$. Now let $n=2$. It is clear that $\lm((u^3v^4-1)g_1)=s_2$. By Lemma \ref{hn}, $\lm(h_2)=p_2$. For the other elements of $G_2$ we have $\lm((uv-1)f)=\lm(f)+(1,1)$. By Lemma \ref{theta}, we conclude that $\lm(G_2)=\mathcal{P}_2$.
Assume the result for $n-1$. 

If $n$ is odd, by Lemma \ref{hn}, $\lm((u-1)h_{n-1})=(1,0)+p_{n-1}=(1,0)+(\frac{n+1}{2},0)=(\frac{n+3}{2},0)=p_{n}$. By Lemma \ref{gn}, $\lm(g_n)=s_n$. For the other elements of $G_n$ we have $\lm((uv-1)f)=\lm(f)+(1,1)$. In addition, by Lemma \ref{gn}, $\lm((u^3v^4-1)g_{n-2})=(3,4)+s_{n-2}=(3,4)+\frac{n-1}{2}(3,4)=\frac{n+1}{2}(3,4)=s_{n-1}$. By Lemma \ref{theta}, we conclude that $\lm(G_n)=\mathcal{P}_n$.

If $n$ is even, $\lm(h_n)=p_n$ by Lemma \ref{hn}. By Lemma \ref{gn}, $\lm((u^3v^4-1)g_{n-1})=(3,4)+s_{n-1}=(3,4)+\frac{n}{2}(3,4)=\frac{n+2}{2}(3,4)=s_n$. For the other elements of $G_n$ we have $\lm((uv-1)f)=\lm(f)+(1,1)$. In addition, by Lemma \ref{hn}, $\lm((u-1)h_{n-2})=(1,0)+p_{n-2}=(1,0)+(\frac{n}{2},0)=(\frac{n+2}{2},0)=p_{n-1}$. By Lemma \ref{theta}, we conclude that $\lm(G_n)=\mathcal{P}_n$.
\end{proof}

\begin{proposition}\label{Gn Z}
Consider the Notation in Lemma \ref{groeb}, and assume that $\KK=\CC$. Then, $\pol(\Gn)\subseteq\Suz$.
\end{proposition}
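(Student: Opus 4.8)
The plan is to identify $\pol(\Gn)$ with $G_n$ from Lemma \ref{groeb}, thereby showing the reduced marked Gröbner basis has coefficients in $\ZZ$. The key tool is Proposition \ref{Mn}: over $\CC$ we know $\lm(\Gn)=\Pnn$, and by Lemma \ref{groeb} we also have $\lm(G_n)=\Pnn$ with the elements of $G_n$ sharing the same set of leading monomials (and leading coefficient $1$). Moreover, by construction every element of $G_n$ lies in $J_n\cap\Suz$. So the first step is to observe that $\Gn$ and $G_n$ are two finite subsets of $J_n$, both with leading coefficient $1$ on each element, and with the same leading-monomial set $\Pnn=\lm(\Gn)=\lm(G_n)$.

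Next I would invoke the uniqueness of the reduced marked Gröbner basis. Since $\lm(G_n)=\lm(\Gn)$ generates the initial ideal $\init_{\preceq}(J_n)$, the set $G_n$ is already a Gröbner basis of $J_n$ with respect to $\preceq$. The remaining task is to check that $G_n$ is in fact the \emph{reduced} one: i.e., for each $g\in G_n$ no monomial of $g$ other than its leading monomial lies in the monomial ideal generated by $\lm(G_n)=\Pnn$. If this reducedness holds, then by uniqueness $G_n=\pol(\Gn)$ as a marked set, and since $G_n\subseteq\Suz$ we get $\pol(\Gn)\subseteq\Suz$, as claimed. The verification of reducedness is essentially a check on the explicit polynomials $g_n$, $h_n$, $(u-1)(uv-1)$, etc., against the combinatorial description of $\Pnn$ in Definition \ref{def pn}; for the inductively-defined families one uses the recursion in Lemma \ref{groeb} together with the shift structure $\theta$ of Lemma \ref{theta} to propagate reducedness from $G_{n-1}$ to $G_n$ (multiplication by $uv-1$ shifts supports by $(1,1)=\theta$, which is exactly how $\Pnu$ embeds into $\Pnn$).

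The main obstacle I expect is precisely the reducedness check: one must confirm that the non-leading terms of each explicit generator — for instance the terms $u$, $-4uv$, $2$ of $g_1$, or the six lower terms of $h_2$ — are \emph{not} standard monomials obstructed by $\Pnn$, and that no cancellation/reduction among the $G_n$ elements is forced. This is where characteristic matters: a priori the reduced Gröbner basis coefficients could be non-integral or could vanish in positive characteristic, but over $\CC$ Proposition \ref{Mn} pins down $\lm(\Gn)$, and once reducedness of $G_n$ is established the integrality comes for free. An alternative, perhaps cleaner route avoiding a direct reducedness argument: since $\lm(G_n)=\lm(\Gn)=\Pnn$, express each marked element of $\Gn$ (which reduces every element of $J_n$ to normal form) in terms of $G_n$ and observe that the change-of-basis is triangular with unit diagonal over $\QQ$; then the reduced form, obtained by inter-reducing $G_n$, stays in $\Suz$ because all the operations — subtracting $\QQ$-multiples to clear obstructed monomials — actually only ever require subtracting $\ZZ$-multiples, since the obstructing leading coefficients are all $1$. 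I would present the uniqueness argument as the primary line and fall back on this triangularity remark to justify that inter-reduction preserves integrality.
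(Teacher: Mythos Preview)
Your fallback argument is correct and is in fact exactly what the paper does: since $G_n\subseteq J_n\cap\Suz$ has $\lm(G_n)=\Pnn=\lm(\Gn)$ with all leading coefficients equal to $1$, the set $G_n$ is a minimal Gr\"obner basis, and the inter-reduction algorithm only ever subtracts integer multiples of monomial-times-basis-element, so the resulting reduced Gr\"obner basis stays in $\Suz$; by uniqueness it equals $\pol(\Gn)$.

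Your \emph{primary} approach, however, does not go through: $G_n$ is in general \emph{not} reduced. Already for $n=2$, the element $(u^3v^4-1)g_1\in G_2$ expands to
\[
u^6v^8+u^4v^4-4u^4v^5+u^3v^4-u+4uv-2,
\]
whose support contains $(4,5)=r_2^1\in\mathcal P_2$ and $(4,4)=(3,3)+(1,1)\in r_2^0+\sz$, both lying in $\langle\mathcal P_2\rangle$. So you cannot hope to verify reducedness of $G_n$ directly, and the inductive scheme you sketch (using $\theta$ and Lemma~\ref{theta}) would not establish it either. You should drop the reducedness claim and present what you call the ``triangularity remark'' as the actual proof: that is the clean and complete argument, and it is the one in the paper.
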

\begin{proof}
According to Proposition \ref{Mn}, $\lm(\Gn)=\Pnn$. By Lemma \ref{groeb}, $G_n\subseteq J_n\cap\Suz$, the leading coefficient of all of its elements is 1, and $\lm(G_n)=\Pnn$. It follows that $G_n$ is a minimal Gr\"obner basis of $J_n$. Since $G_n\subseteq\Suz$ and the algorithm to turn $G_n$ into a reduced Gr\"obner basis takes place in $\Suz$, it follows that the resulting reduced Gr\"obner basis is contained in $\Suz$. By the uniqueness of the reduced Gr\"obner basis, we conclude that $\pol(\Gn)\subseteq\Suz$.
\end{proof}

This proposition is the first step towards proving that the Gr\"obner fan of $J_n$ contains a non-regular cone for each $n$. The following lemma is proved by R. Toh-Yama \cite{Toh} in the case $\KK=\CC$; however,   the proof is the same over any field. We reproduce it here for the sake of completeness.

\begin{lemma}[{\cite[Lemma 2.12(1)]{Toh}}]\label{dim}
Consider Notation \ref{nota a3} and Definition \ref{def pn}. Then,
$$\dim_{\KK}\Suk/\init_{\preceq}(J_n)=\dim_{\KK}\Suk/\langle\Pnn\rangle.$$
\end{lemma}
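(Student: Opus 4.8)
The plan is to show the two quotient spaces have the same dimension by exhibiting a single monomial basis that works for both sides. First I would recall the standard Gr\"obner-basis fact that $\dim_{\KK}\Suk/\init_{\preceq}(J_n)=\dim_{\KK}\Suk/J_n$, since passing to the initial ideal with respect to a monomial order is a flat degeneration that preserves the Hilbert function (equivalently, the standard monomials --- those not in $\init_{\preceq}(J_n)$ --- form a $\KK$-basis of $\Suk/J_n$). So it suffices to compare $\init_{\preceq}(J_n)$ with $\langle\Pnn\rangle$, where here I interpret $\langle\Pnn\rangle$ as the monomial ideal generated by $\{u^\alpha : \alpha\in\Pnn\}$.

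Next I would invoke the description of the reduced marked Gr\"obner basis: by definition $\lm(\Gn)$ is the set of leading exponents of the elements of $\Gn$, and since $\Gn$ is a Gr\"obner basis of $J_n$ with respect to $\preceq$, the initial ideal $\init_{\preceq}(J_n)$ is exactly the monomial ideal generated by $\{u^\alpha : \alpha\in\lm(\Gn)\}$. Therefore $\init_{\preceq}(J_n)=\langle u^\alpha : \alpha\in\lm(\Gn)\rangle$, and the desired equality of dimensions reduces to showing that the monomial ideal generated by $\lm(\Gn)$ equals the monomial ideal generated by $\Pnn$; indeed $\dim_{\KK}\Suk/\langle\Pnn\rangle$ is by definition the number of lattice points of $\sz$ not lying in $\Pnn+\sz$ (the complement of the staircase), and similarly for $\lm(\Gn)$. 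Hence it is enough to know $\lm(\Gn)=\Pnn$ as sets --- but this is exactly the content available to us.

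Here is the one subtlety I would flag, and it is the main obstacle: the identification $\lm(\Gn)=\Pnn$ has been established in Proposition \ref{Mn} only under the hypothesis $\KK=\CC$, whereas Lemma \ref{dim} is stated for an arbitrary field $\KK$. Looking back, however, this lemma is asserting an equality between two quantities, each of which is determined purely combinatorially once one knows the \emph{leading exponents} of a Gr\"obner basis --- and the point is that one does \emph{not} need to know $\lm(\Gn)$ equals $\Pnn$ to prove the lemma. The correct route is: whatever $\lm(\Gn)$ is, one has $\init_{\preceq}(J_n)=\langle u^\alpha : \alpha\in\lm(\Gn)\rangle$, and the standard monomials of $J_n$ are precisely the lattice points of $\sz$ outside $\lm(\Gn)+\sz$. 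So I would instead prove the lemma by the chain
\[
\dim_{\KK}\Suk/\init_{\preceq}(J_n)=\#\big(\sz\setminus(\lm(\Gn)+\sz)\big),
\]
together with the analogous identity for $\langle\Pnn\rangle$, and then observe that the two right-hand sides agree. To see that last point without circularity, I would appeal to Lemma \ref{groeb}: the explicitly constructed set $G_n\subseteq J_n\cap\Suz$ has $\lm(G_n)=\Pnn$, so the monomial ideal $\langle u^\alpha:\alpha\in\Pnn\rangle=\langle \lm(G_n)\rangle$ is contained in $\init_{\preceq}(J_n)$; hence $\dim_{\KK}\Suk/\langle\Pnn\rangle\geq\dim_{\KK}\Suk/\init_{\preceq}(J_n)$. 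The reverse inequality is automatic once one knows $\lm(\Gn)\supseteq$ a generating set of a monomial ideal containing $\langle\Pnn\rangle$; in any case the cleanest statement is that both sides count standard monomials and I would spell out that $\langle\Pnn\rangle\subseteq\init_{\preceq}(J_n)$ forces equality because a monomial ideal is determined by its complement's cardinality only when the containment is an equality of ideals, which I get from $\lm(G_n)=\Pnn$ generating all of $\init_\preceq(J_n)$'s relevant part. (In the write-up I would simply cite Toh-Yama's argument, which is characteristic-free: it only uses that $G_n$ is a Gr\"obner basis with leading terms indexed by $\Pnn$, a fact supplied over any $\KK$ by Lemma \ref{groeb} once combined with a minimality check, so no appeal to Proposition \ref{Mn} or to $\KK=\CC$ is actually needed.)
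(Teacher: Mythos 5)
Your proposal correctly records the standard reduction $\dim_{\KK}\Suk/\init_{\preceq}(J_n)=\dim_{\KK}\Suk/J_n$, and the use of Lemma \ref{groeb} to get the containment $\langle\Pnn\rangle\subseteq\init_{\preceq}(J_n)$, hence the inequality $\dim_{\KK}\Suk/\langle\Pnn\rangle\geq\dim_{\KK}\Suk/\init_{\preceq}(J_n)$, is sound. But you never actually obtain the reverse inequality: the passages ``the reverse inequality is automatic once one knows $\lm(\Gn)\supseteq\ldots$'' and ``which I get from $\lm(G_n)=\Pnn$ generating all of $\init_\preceq(J_n)$'s relevant part'' are circular. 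Lemma \ref{groeb} only shows that $G_n\subseteq J_n$ with $\lm(G_n)=\Pnn$; it does \emph{not} show that $G_n$ is a Gr\"obner basis of $J_n$ over an arbitrary $\KK$, i.e.\ that $\langle\lm(G_n)\rangle$ is all of $\init_{\preceq}(J_n)$. That stronger claim is precisely what would make the two ideals coincide, and in the paper it is a \emph{consequence} of Lemma \ref{dim} (via Propositions \ref{Gn Z} and \ref{mod}), not an input to it. Your parenthetical appeal to ``Toh-Yama's argument, which is characteristic-free'' has the same issue: over $\CC$ it relies on Proposition \ref{Mn}, which is exactly the characteristic-dependent ingredient you were trying to avoid.

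The paper sidesteps this by never comparing ideals at all; it computes both sides as numbers. On the left, $J_0=\langle u-1,uv-1\rangle$ (the generator $u^3v^4-1$ is redundant), $(1,1,1)$ is a non-singular point, so $\Suk_{J_0}$ is a two-dimensional regular local ring and $\dim_{\KK}\Suk/J_n=\dim_{\KK}\KK[x_1,x_2]/\langle x_1,x_2\rangle^{n+1}=\tfrac{1}{2}(n+1)(n+2)$. On the right, $\dim_{\KK}\Suk/\langle\Pnn\rangle=|\sz\setminus(\Pnn+\sz)|$, and this cardinality is computed combinatorially in \cite[Lemma 2.10(2)]{Toh} to be $\tfrac{1}{2}(n+1)(n+2)$ as well. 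Both steps are visibly characteristic-free. To repair your proof you would need to carry out the same two counts; once you do, the inequality coming from Lemma \ref{groeb} is no longer needed for this lemma (though it does, combined with Lemma \ref{dim}, retroactively establish that $G_n$ is a Gr\"obner basis over any $\KK$).
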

\begin{proof}
First,  it is known that 
$\dim_{\KK}\Suk/\init_{\preceq}(J_n)=\dim_{\KK}\Suk/J_n$  \cite[Proposition A.2.1]{duart2013}. 
Let $J_0=\langle u-1,u^3v^4-1,uv-1\rangle$. Then $J_0=\langle u-1,uv-1\rangle$, because
$$u^3v^4-1=(u^3v^3+u^2v^2+uv+1)(uv-1)-u^3v^4(u-1).$$
In addition, $\Suk_{J_0}$ is a regular local ring of dimension two since $(1,1,1)\in X$ is non-singular. There is an isomorphism of graded $\KK$-algebras:
$$\KK[x_1,x_2]\cong gr_{J_0\Suk_{J_0}}(\Suk_{J_0})\cong gr_{J_0}(\Suk),$$
where 
$x_1\mapsto[u-1\mod J_0^2]$ and $x_2\mapsto[uv-1\mod J_0^2]$.
Hence 
$$\dim_{\KK}\Suk/J_n=\dim_{\KK}\KK[x_1,x_2]/\langle x_1,x_2 \rangle^{n+1}=\frac{1}{2}(n+1)(n+2).$$
On the other hand, $\dim_{\KK}\Suk/\langle\Pnn\rangle=|\sz\setminus(\Pnn+\sz)|$. This last set corresponds to the monomials in $\Suk$ not divisible by any monomial in $\Pnn$. In addition, $|\sz\setminus(\Pnn+\sz)|=\frac{1}{2}(n+1)(n+2)$ \cite[Lemma 2.10(2)]{Toh}. This concludes the proof.
\end{proof}

\begin{notation}\label{nota a3p}
Consider the following notation:
\begin{itemize}
\item $J_n^{(0)}$ denotes the ideal $J_n\subseteq\Suc$.
\item $J_n^{(p)}$ denotes the ideal $J_n\subseteq\Suk$, where the characteristic of $\KK$ is $p>0$.
\item $\Gn^{(0)}$ denotes the Gr\"obner basis $\Gn$ of $J_n^{(0)}$ of Notation \ref{nota a3}.
\item $\Gn^{(p)}$ denotes the Gr\"obner basis $\Gn$ of $J_n^{(p)}$ of Notation \ref{nota a3}.
\item $\Gn^{(0)}\mod p:=\{(g^{(p)},\alpha)|(g,\alpha)\in\Gn^{(0)}\}$, where $g^{(p)}\in\KK[u,u^3v^4,uv]$ is defined as follows. Let $\overline{g^{(p)}}\in\ZZ_p[u,u^3v^4,uv]$ denote the polynomial $g$ whose coefficients are taken modulo $p$ (this makes sense by Proposition \ref{Gn Z}). Then $g^{(p)}$ denotes the polynomial induced by $\overline{g^{(p)}}$ via $j:\ZZ_p\hookrightarrow\KK$. Notice that $\lm(g^{(p)})=\alpha=\lm(g)$ since the coefficient of the leading monomial of $g$ is 1.
\end{itemize}
\end{notation}

\begin{proposition}\label{mod}
Consider Notation \ref{nota a3p}. Then,
$$\Gn^{(p)}=\Gn^{(0)} \mod p.$$
\end{proposition}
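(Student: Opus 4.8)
The plan is to show that reducing the integral Gröbner basis $\Gn^{(0)}$ modulo $p$ yields exactly the reduced and marked Gröbner basis of $J_n^{(p)}$, and then to invoke uniqueness of the reduced marked Gröbner basis. Concretely, I would first verify that $\Gn^{(0)} \bmod p$ is contained in $J_n^{(p)}$: each polynomial $g \in \pol(\Gn^{(0)})$ lies in $\Suz$ by Proposition \ref{Gn Z}, and an explicit expression $g = \sum h_i \cdot (\text{generators of } J_n)$ with $h_i \in \Suk$ can be taken (after clearing denominators, but in fact over $\ZZ$ since the leading coefficient is $1$ and the division algorithm stays integral) with coefficients in $\Suz$; applying $j\colon\ZZ_p\hookrightarrow\KK$ to such an expression exhibits $g^{(p)} \in J_n^{(p)}$. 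Since the leading coefficient of each $g$ is $1$, it does not vanish mod $p$, so the marking $\lm(g^{(p)}) = \lm(g)$ is preserved, and hence $\lm(\Gn^{(0)}\bmod p) = \lm(\Gn^{(0)}) = \Pnn$ (using Proposition \ref{Mn}).

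Next I would argue that $\Gn^{(0)}\bmod p$ generates $\init_{\preceq}(J_n^{(p)})$, i.e. that its leading monomials generate the initial ideal. The containment $\langle \Pnn\rangle \subseteq \init_{\preceq}(J_n^{(p)})$ is immediate from the previous paragraph. For the reverse, I would use a dimension count: by Lemma \ref{dim} applied over $\KK$ of characteristic $p$, $\dim_\KK \Suk/\init_{\preceq}(J_n^{(p)}) = \dim_\KK \Suk/\langle\Pnn\rangle = \tfrac12(n+1)(n+2)$, which is finite; together with the inclusion $\langle\Pnn\rangle\subseteq\init_\preceq(J_n^{(p)})$ of monomial ideals of the same finite colength, this forces $\langle\Pnn\rangle = \init_\preceq(J_n^{(p)})$. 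Therefore $\Gn^{(0)}\bmod p$ is a Gröbner basis of $J_n^{(p)}$ whose leading monomials are exactly the minimal generators $\Pnn$ of the initial ideal, so it is a minimal marked Gröbner basis.

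Finally, I would check that $\Gn^{(0)}\bmod p$ is reduced. Since $\Gn^{(0)}$ is the reduced Gröbner basis, for each $(g,\alpha)\in\Gn^{(0)}$ no monomial appearing in $g$ (other than $\alpha$) is divisible by any $\beta\in\lm(\Gn^{(0)})$; this is a purely combinatorial condition on the support of $g$, and passing to $g^{(p)}$ can only delete monomials from the support (those whose coefficient is divisible by $p$) — it never creates new ones — so the reducedness condition is inherited, and the leading coefficient remains $1$. Thus $\Gn^{(0)}\bmod p$ is the reduced marked Gröbner basis of $J_n^{(p)}$ with respect to $\preceq$. By uniqueness of the reduced marked Gröbner basis, $\Gn^{(p)} = \Gn^{(0)}\bmod p$, as claimed.

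The main obstacle I anticipate is the first step, namely producing \emph{integral} cofactor expressions showing $g^{(p)} \in J_n^{(p)}$: one must be careful that the reduction $g \mapsto g^{(p)}$ genuinely commutes with membership in $J_n$, which requires either that the division/Gröbner computation exhibiting $g \in J_n^{(0)}$ can be run over $\ZZ_p[u,u^3v^4,uv]$ without introducing denominators prime to $p$ — guaranteed here because all the relevant leading coefficients are $1$ (Lemma \ref{groeb}, Lemmas \ref{gn} and \ref{hn}) — or else a direct appeal to the explicit generators $G_n \subseteq J_n\cap\Suz$ of Lemma \ref{groeb} together with a flatness/base-change argument. Everything else is bookkeeping with supports of polynomials and the colength computation of Lemma \ref{dim}.
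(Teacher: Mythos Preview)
Your proposal is correct and follows essentially the same route as the paper: show $\pol(\Gn^{(0)}\bmod p)\subseteq J_n^{(p)}$, use Proposition~\ref{Mn} and the leading-coefficient-$1$ observation to get $\langle\Pnn\rangle\subseteq\init_\preceq(J_n^{(p)})$, invoke Lemma~\ref{dim} for equality, and then note that supports only shrink modulo $p$ so reducedness is inherited. The paper compresses your first step into the phrase ``by construction'', relying implicitly on the proof of Proposition~\ref{Gn Z} (the reduced basis is obtained from the explicit integral set $G_n\subseteq J_n\cap\Suz$ of Lemma~\ref{groeb} by a reduction algorithm that stays in $\Suz$), which is exactly the integral-cofactor witness you correctly identify as the only nontrivial point.
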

\begin{proof}
We want to show that $\Gn^{(0)} \mod p$ is the reduced and marked Gr\"obner basis  of $J_n^{(p)}$. We know that $\pol(\Gn^{(0)})\subseteq J_n^{(0)}$. Therefore, by construction, $\pol(\Gn^{(0)}\mod p)\subseteq J_n^{(p)}$. Since the leading coefficients of the elements of $\pol(\Gn^{(0)})$ are all 1, by  Proposition \ref{Mn},
$$\langle\Pnn\rangle=\langle \lm(\Gn^{(0)}\mod p)\rangle\subseteq \init_{\preceq}(J_n^{(p)}).$$
By Lemma \ref{dim}, it follows that $\langle \lm(\Gn^{(0)}\mod p)\rangle=\langle\Pnn\rangle=\init_{\preceq}(J_n^{(p)})$, implying that $\pol(\Gn^{(0)}\mod p)$ is a Gr\"obner basis of $J_n^{(p)}$. Since $\lm(\Gn^{(0)}\mod p)=\lm(\Gn^{(0)})$ and the support of each element of $\Gn^{(0)}\mod p$ is contained in the support of the corresponding element of $\Gn^{(0)}$, we conclude that $\Gn^{(p)}=\Gn^{(0)} \mod p.$
\end{proof}

\begin{notation}\label{nota cones}
Consider the following notation:
\begin{align}
\Cc:=&\{w\in\sigma|(\alpha-\beta)\cdot w\geq0,\mbox{ for all } (g,\alpha)\in\Gn^{(0)}, \beta\in \supp(g)\setminus\{\alpha\}\}, \notag\\
\Cp:=&\{w\in\sigma|(\alpha-\beta)\cdot w\geq0,\mbox{ for all } (g^{(p)},\alpha)\in\Gn^{(p)}, \beta\in \supp(g^{(p)})\setminus\{\alpha\}\}. \notag
\end{align}
\end{notation}

The following  statement is purely combinatorial, and so, it is independent of the characteristic of the field $\KK$.

\begin{proposition}[{\cite[Theorem 1.17]{Toh}}]
Consider Notation \ref{nota a3p} and \ref{nota cones}. Then $\Cc$ (respectively, $\Cp$) is a maximal cone of $\GrFan(J_n^{(0)})$ (respectively, $\GrFan(J_n^{(p)})$).
\end{proposition}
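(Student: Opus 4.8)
The plan is to reduce the claim to a statement about Gr\"obner fans that is entirely combinatorial, and hence insensitive to the characteristic of $\KK$, so that a single argument handles both $\GrFan(J_n^{(0)})$ and $\GrFan(J_n^{(p)})$ at once. Recall the general fact (see \cite[Section 1.2]{DuarteToric}, \cite{Sturm}) that if $G$ is the reduced Gr\"obner basis of an ideal $I\subseteq\Suk$ with respect to a refined monomial order $\preceq_{w_0}$ for $w_0$ in the interior of $\sigma$, then the set
$$C_G:=\{w\in\sigma\mid (\alpha-\beta)\cdot w\geq 0 \text{ for all marked pairs }(g,\alpha)\in G,\ \beta\in\supp(g)\setminus\{\alpha\}\}$$
is exactly the closure of the set of weight vectors $w'\in\sigma$ for which $\init_{w'}(I)=\init_{\preceq}(I)$, and this is a maximal cone of the Gr\"obner fan $\GrFan(I)$ containing $w_0$. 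So I would first observe that in both cases $\Gn^{(0)}$ and $\Gn^{(p)}$ are by definition (Notation \ref{nota a3}) the reduced and marked Gr\"obner bases of $J_n^{(0)}$ and $J_n^{(p)}$ with respect to the order $\preceq$ refined by a vector in the interior of $\sigma$; hence $\Cc$ and $\Cp$ are precisely the cones $C_G$ for these two bases.

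Next I would spell out why $C_G$ is genuinely a \emph{cone} (not merely a polyhedron) and why it is full-dimensional, hence maximal. It is a cone because each defining inequality $(\alpha-\beta)\cdot w\geq 0$ is homogeneous, so $C_G$ is an intersection of finitely many half-spaces through the origin; it is contained in $\sigma$ by construction. For full-dimensionality one notes that $w_0$ lies in the relative interior of $C_G$: since $\preceq$ refined by $w_0$ chose $\alpha=\lm(g)$ as the leading term, we have $\alpha\cdot w_0>\beta\cdot w_0$ for every other $\beta\in\supp(g)$ (strict, because $\preceq_{w_0}$ breaks ties only when the $w_0$-weights already agree, and after reduction no two monomials of a single $g$ have equal $\alpha$ and differ), so $w_0$ satisfies all defining inequalities strictly; as $\dim\sigma=2$ this forces $\dim C_G=2$. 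Maximality then follows from the standard structure theorem for Gr\"obner fans: the maximal cones are exactly the closures of the equivalence classes $\{w'\mid\init_{w'}(I)=\text{fixed}\}$, and $C_G$ is such a closure.

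The bulk of the argument is the identification $C_G=\overline{\{w'\in\sigma\mid \init_{w'}(I)=\init_{\preceq}(I)\}}$. For the inclusion $\supseteq$: if $\init_{w'}(I)=\init_{\preceq}(I)$ then for each $g\in G$ the $w'$-leading form of $g$ must be a scalar multiple of $x^{\alpha}$ where $\alpha=\lm(g)$ (because $x^\alpha$ generates $\init_{\preceq}(I)$ locally at that generator and $g$ reduces to a unique normal form), which says exactly $(\alpha-\beta)\cdot w'\geq 0$ for all $\beta\in\supp(g)\setminus\{\alpha\}$, i.e. $w'\in C_G$. For $\subseteq$: if $w'\in C_G$ then $\init_{w'}(g)$ still has $x^\alpha$ in its support for every $g\in G$, so $\langle x^\alpha : (g,\alpha)\in G\rangle\subseteq\init_{w'}(I)$; since this monomial ideal equals $\init_{\preceq}(I)$ and, by Lemma \ref{dim} together with the fact that $\dim_\KK\Suk/\init_{w'}(I)=\dim_\KK\Suk/I=\dim_\KK\Suk/\init_{\preceq}(I)$, the two ideals have the same (finite, by Macaulay) Hilbert function, the inclusion of monomial ideals is an equality, and one upgrades to $\init_{w'}(I)=\init_{\preceq}(I)$ by a standard lifting argument. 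I expect this last identification — in particular the passage from "the initial ideal contains the right monomial ideal" to "the initial ideal equals the right initial ideal", using the dimension/Hilbert-function count — to be the main technical point; everything else is bookkeeping. Since none of these steps mentions the characteristic of $\KK$, the proof applies verbatim to $\Cc$ and $\Cp$, which is the assertion.
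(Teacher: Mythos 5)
The paper does not actually prove this proposition at all: it cites it verbatim from Toh-Yama's Theorem 1.17 and simply remarks (in the sentence immediately preceding the statement) that, being a purely combinatorial assertion about Gr\"obner fans, it is independent of the characteristic of $\KK$. Your attempt to reconstruct a proof from first principles is therefore a genuinely different route — you are reproving the general structure theorem for Gr\"obner cones rather than appealing to it — and your closing observation (that nothing in the argument mentions $\ch\KK$) is precisely the meta-observation the paper makes. That part is sound in spirit.

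However, there are concrete gaps in your argument. The most serious is the full-dimensionality step. You assert that $w_0$ (the first row $(2,-1)$ of the defining matrix) lies in the \emph{relative interior} of $C_G$, on the grounds that ``after reduction no two monomials of a single $g$ have equal $w_0$-weight.'' This is false for the ideals in question. For example, $g_1=u^3v^4+u-4uv+2$ belongs to the reduced Gr\"obner basis, and its two monomials $u^3v^4$ and $u$ satisfy $(2,-1)\cdot(3,4)=2=(2,-1)\cdot(1,0)$ — a tie, broken only by the second row $(1,1)$ of the matrix. Precisely because of such ties, $(2,-1)$ is \emph{not} an interior point of $C_{\Gn}$; the paper's Lemma~\ref{crit rays} and Proposition~\ref{PropToh218220} show that $(2,-1)$ is in fact a \emph{ray} of this cone. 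So your argument that $C_G$ is two-dimensional does not go through as written; one needs a different justification (e.g.\ that a generic perturbation of $w_0$ inside $\sigma$ lies in the interior of some maximal Gr\"obner cone which, by uniqueness of reduced Gr\"obner bases, must be $C_G$). A secondary issue: in the $\subseteq$ direction of your identification, the step ``$x^\alpha$ is in the support of $\init_{w'}(g)$, so $\langle x^\alpha\rangle\subseteq\init_{w'}(I)$'' is not valid when the inequality $(\alpha-\beta)\cdot w'\geq0$ is tight — then $\init_{w'}(g)$ is a binomial, and $x^\alpha$ alone need not lie in the initial ideal. One must restrict to $w'$ in the relative interior of $C_G$ and then take closures.
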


\begin{theorem}[{\cite[Theorem 2.21]{Toh}}]\label{main Toh}
Consider Notation \ref{nota a3p} and \ref{nota cones}. Let $n\in\ZZ^+$. Then $\Cc$ is a non-regular cone of $\GrFan(J_n^{(0)})$. In particular, $\Nash_n(X)$ is a singular variety, where $X\subseteq\CC^3$.
\end{theorem}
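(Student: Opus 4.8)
The plan is first to reduce the singularity claim to the single combinatorial assertion that the cone $\Cc$ is not regular, and then to read $\Cc$ off from the Gr\"obner data already assembled. By the proposition preceding this theorem, $\Cc$ is a maximal cone of $\GrFan(J_n^{(0)})$, and by Theorem \ref{t. Nash = GF} the fan $\GrFan(J_n^{(0)})$ is the fan of $\Nn$, the normalization of $\Na$ over $\CC$ (Remark \ref{disting}). A normal toric variety is non-singular exactly when every cone of its fan is regular, i.e.\ spanned by part of a $\ZZ$-basis of the lattice. So if $\Cc$ is shown to be non-regular, then $\Nn$ is singular; and were $\Nash_n(X)$ non-singular it would be normal, so the normalization $\eta:\Nn\to\Na$ would be an isomorphism, which is impossible. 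Thus the theorem follows from the assertion that $\Cc$ is a non-regular cone.

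Since $\sigma\subseteq\RR^2$ and $\GrFan(J_n^{(0)})$ is supported on $\sigma$, the maximal cone $\Cc$ is two-dimensional, hence determined by its two primitive extremal ray generators, and it is regular iff the absolute value of their $2\times2$ determinant equals $1$. The facets of $\Cc$ come from the inequalities $(\alpha-\beta)\cdot w\ge 0$ with $(g,\alpha)\in\Gn^{(0)}$, $\beta\in\supp(g)\setminus\{\alpha\}$, together with $w\in\sigma$. The structural point is that, because the first row of the matrix defining $\preceq$ is $(2,-1)$, every difference $\alpha-\beta=(a,b)$ satisfies $b\le 2a$; hence each such halfspace, and $\sigma$ as well, contains $(2,-1)$, so $(2,-1)\in\Cc$. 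Working in the half-plane $\{w_2<0\}$ and writing $m=-w_2$, every genuine lower bound these inequalities put on $w_1$ has the form $w_1\ge (b/a)m$ with $0<b/a\le 2$, so the tightest conceivable one is $w_1\ge 2m$, and it occurs precisely when $\Gn^{(0)}$ contains an element with a difference $\alpha-\beta$ equal to a positive multiple of $(1,2)$. Using the explicit reduced Gr\"obner basis of Lemma \ref{groeb} one produces such a difference (for instance, a term of $g_n$, or of a suitable shift of it, surviving reduction with $\alpha-\beta=(2,4)$), so $\{w_1+2w_2=0\}$ is a facet of $\Cc$ with primitive normal $(1,2)$, and $(2,-1)$ is the primitive generator of the corresponding extremal ray $\Cc\cap\{w_1+2w_2=0\}$.

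It then remains to locate the opposite extremal ray $v_n$ and check $v_n\cdot(1,2)=2$, which forces $\det((2,-1),v_n)=\pm2\neq\pm1$ and hence non-regularity. Upper bounds on $w_1$ arise only from differences $\alpha-\beta=(a,b)$ with $a<0$, necessarily with $b/a\ge 2$. For $n=1$ no basis element has a term whose first coordinate exceeds that of its leading monomial, so the only remaining wall is supplied by $\sigma$ itself and $v_1=(0,1)$; for $n\ge 2$ the polynomials $h_m$ and the translates of them appearing in $\Gn^{(0)}$ (via the recursion of Lemma \ref{groeb}) carry a $u^3v^4$-type monomial lying below a leading monomial that is low in $v$, which produces the wall $\{w_1+4w_2=0\}$ and $v_n=(4,-1)$. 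In either case $v_n\cdot(1,2)=2$, so $\{(2,-1),v_n\}$ is not a $\ZZ$-basis and $\Cc$ is non-regular.

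The main obstacle is the uniform control behind the last paragraph: one must show, for every $n$, that in the reduced basis $\Gn^{(0)}$ (i) a difference proportional to $(1,2)$ really does survive reduction, so that $(2,-1)$ is an extremal ray rather than an interior point of $\Cc$, and (ii) no element contributes a wall strictly between $\{w_1+2w_2=0\}$ and $\{w_1+4w_2=0\}$ — in particular that no difference $(-1,-3)$ occurs below a low-$v$ leading monomial, which would give $\det=\pm1$. This calls for an induction on $n$, split by parity, tracking the supports of the recursively built elements of Lemma \ref{groeb} and verifying that their ``smaller terms'' are harmless; Lemma \ref{theta} and the length bookkeeping of Lemma \ref{dim} are what keep this manageable. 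This is precisely Toh's computation, which we invoke here for $\KK=\CC$.
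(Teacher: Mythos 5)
The paper does not prove this theorem: it is quoted verbatim from \cite[Theorem 2.21]{Toh} and used as a black box, the manuscript's actual contribution being the positive-characteristic extension in Theorem \ref{ThmNotResA3}. With that caveat, your opening reduction is correct and matches the logic the paper implicitly relies on: $\GrFan(J_n^{(0)})$ is the fan of $\Nn$ by Theorem \ref{t. Nash = GF}, a non-regular maximal cone makes $\Nn$ singular, and a non-singular $\Nash_n(X)$ would be normal, so $\eta$ would be an isomorphism, giving a contradiction.

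The computational half of your argument, however, contains a genuine error. You claim that for all $n\ge 2$ the second wall of $\Cc$ is $\{w_1+4w_2=0\}$ with primitive ray $v_n=(4,-1)$, and you phrase the remaining work as checking that ``no element contributes a wall strictly between $\{w_1+2w_2=0\}$ and $\{w_1+4w_2=0\}$''. This is false: the paper's own Proposition \ref{PropToh218220} (which records Toh's Propositions 2.18 and 2.20) gives the second ray as $\RR_{\ge 0}(2n-2,-n+2)$ for $n$ odd and $\RR_{\ge 0}(2n,-n+1)$ for $n$ even, and this equals $(4,-1)$ only for $n\in\{2,3\}$. Already for $n\in\{4,5\}$ the ray is $(8,-3)$, which lies on $\{3w_1+8w_2=0\}$, a hyperplane strictly between your two, so the wall moves with $n$ and your item (ii) is not a bookkeeping lemma awaiting verification but a false statement. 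What saves the conclusion is not your explicit formula but the invariant $v_n\cdot(1,2)=2$, which does hold for Toh's correct $v_n$ by a one-line check: $(2n-2)+2(-n+2)=2$ for $n$ odd and $2n+2(-n+1)=2$ for $n$ even, giving $\bigl|\det\bigl((2,-1),v_n\bigr)\bigr|=2$ for every $n$. Your draft verifies this only on the incorrect formula, so the uniformity in $n$ that the theorem requires is not established by your argument; it is supplied by Toh's inductive determination of the reduced Gr\"obner basis and its rays, which you (like the paper) ultimately defer to.
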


Our goal is to prove that $\Cp$ is a non-regular cone. Notice that $\Cc\subseteq\Cp$ by definition and Proposition \ref{mod}. However, this inclusion is not enough to show that $\Cp$ is non-regular. To that end, we actually prove that $\Cc=\Cp$. 

Notice that the following statements are purely combinatorial, therefore, they are independent of the characteristic of the field $\KK$.

\begin{lemma}[{\cite[Lemmas 1.16,2.16]{Toh}}]\label{crit rays}
Consider Notation \ref{nota a3p} and \ref{nota cones}. Then
\begin{enumerate}
\item $(2,-1)\in C_{\Gn^{(p)}}$.
\item Let $w\in C_{\Gn^{(p)}}\cap\ZZ^2$, $w\neq(0,0)$. If there exists $(g^{(p)},\alpha)\in\Gn^{(p)}$ and $\beta\in \supp(g^{(p)})\setminus\{\alpha\}$ such that $(\alpha-\beta)\cdot w=0$, then $w$ is a ray of $C_{\Gn^{(p)}}$.
\end{enumerate}
\end{lemma}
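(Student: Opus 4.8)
The plan is to prove the two assertions separately, using the explicit shape of the Gröbner basis $\Gn^{(p)}$ furnished by Lemma~\ref{groeb} and Proposition~\ref{mod}, together with the combinatorial description of $\Cp$ in Notation~\ref{nota cones}. Since both statements are characteristic-free, it suffices to argue over an arbitrary field; I will refer to $\Gn := \Gn^{(p)}$ and $C := \Cp$ for brevity.

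For part (1), I would verify directly that $w=(2,-1)$ satisfies $(\alpha-\beta)\cdot w \geq 0$ for every marked generator $(g^{(p)},\alpha)\in\Gn$ and every $\beta\in\supp(g^{(p)})\setminus\{\alpha\}$. The key observation is that $\preceq$ is the monomial order defined by the matrix $\left(\begin{smallmatrix} 2 & -1 \\ 1 & 1\end{smallmatrix}\right)$, whose first row is exactly $(2,-1)$. Hence for any polynomial $g$ with marked leading term at $\alpha$, every other exponent $\beta$ in its support satisfies either $(2,-1)\cdot\alpha > (2,-1)\cdot\beta$, or $(2,-1)\cdot\alpha = (2,-1)\cdot\beta$ with the tie broken by the second row $(1,1)$. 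In either case $(\alpha-\beta)\cdot(2,-1)\geq 0$. Since $(2,-1)$ generates a ray of $\sigma$ (the cone is generated by $(0,1)$ and $(4,-3)$, and $\sd$ is generated by $(1,0),(3,4),(1,1)$; one checks $(2,-1)\in\sigma$), we conclude $(2,-1)\in C$. This step is essentially immediate from the definition of a refined monomial order and requires no computation beyond confirming $(2,-1)\in\sigma$.

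For part (2), suppose $w\in C\cap\ZZ^2$, $w\neq(0,0)$, and that some marked pair $(g^{(p)},\alpha)\in\Gn$ and $\beta\in\supp(g^{(p)})\setminus\{\alpha\}$ satisfy $(\alpha-\beta)\cdot w = 0$. The cone $C$ is cut out inside $\sigma\subseteq\RR^2$ by finitely many half-planes $(\alpha-\beta)\cdot(-)\geq 0$; it is a two-dimensional rational polyhedral cone (it is a maximal cone of $\GrFan(J_n^{(p)})$, hence full-dimensional). A nonzero vector of such a $2$-dimensional cone lies on a ray if and only if it lies on the boundary, i.e.\ it meets with equality one of the defining inequalities (or a boundary ray of $\sigma$ itself). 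The hypothesis $(\alpha-\beta)\cdot w = 0$ says precisely that $w$ lies on the wall defined by the inequality coming from $(g^{(p)},\alpha,\beta)$; combined with $w\in C$, this forces $w$ to be on the boundary of $C$, hence on a ray. The only subtlety is to rule out the degenerate possibility that the wall $(\alpha-\beta)^\perp$ meets $C$ only at the origin, which cannot happen since $w\neq 0$ lies in that intersection — so the wall contributes a genuine edge and $w$ spans it.

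The main obstacle, such as it is, is purely bookkeeping in part (1): one must know the supports of all the generators in $\Gn^{(p)}$ well enough to see that the refined-order comparison genuinely yields $(\alpha-\beta)\cdot(2,-1)\geq 0$ across the board. But this is automatic from the fact that $\alpha=\lm_{\preceq}(g^{(p)})$ and the first row of the defining matrix of $\preceq$ is $(2,-1)$: no case analysis on individual generators is needed. Part (2) is a general fact about full-dimensional rational cones in $\RR^2$ and carries over verbatim from \cite[Lemma~1.16]{Toh} since $\Gn^{(p)}$ and $\Gn^{(0)}$ have the same leading monomials and comparable supports by Proposition~\ref{mod}; the argument never uses the characteristic.
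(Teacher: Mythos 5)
The paper itself does not reproduce a proof of this lemma: it simply cites \cite[Lemmas 1.16, 2.16]{Toh} and remarks that the statements are "purely combinatorial" and therefore hold in any characteristic. Your proposal supplies the underlying argument explicitly, and it is correct.

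Your part (1) argument is exactly the standard fact about matrix orders: because $(2,-1)$ is the first row of the matrix defining $\preceq$, the marked term $\alpha=\lm_{\preceq}(g^{(p)})$ satisfies $(\alpha-\beta)\cdot(2,-1)\geq 0$ for every $\beta$ in the support, with ties broken by $(1,1)$; and indeed $(2,-1)=\tfrac12(0,1)+\tfrac12(4,-3)\in\sigma$. Your part (2) argument is the standard fact that for a full-dimensional pointed cone $C\subseteq\RR^2$ lying entirely in a closed half-plane $\{(\alpha-\beta)\cdot v\geq 0\}$, a nonzero $w\in C$ with $(\alpha-\beta)\cdot w=0$ lies on the face $C\cap(\alpha-\beta)^{\perp}$, which is then $1$-dimensional (it contains $w\neq 0$ but is contained in a line), hence a ray. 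The one place you lean on a fact established later in the paper is the full-dimensionality of $\Cp$; that is harmless because full-dimensionality of the Gröbner cone of the fixed order $\preceq$ is a general fact (perturb $(2,-1)$ by $\epsilon(1,1)$ to make all defining inequalities strict), so there is no circularity. In short, your proof is correct and fills in precisely the combinatorial argument the paper defers to Toh-Yama.
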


In the work of R. Toh-Yama \cite[Propositions 2.18,2.20]{Toh}, the generating rays of $\Cc$ are given explicitly. A similar strategy, along with Proposition \ref{mod}, gives the corresponding result for $\Cp$.

\begin{proposition}[{\cite[Propositions 2.18,2.20]{Toh}}]\label{PropToh218220}
Consider Notation \ref{nota a3p} and \ref{nota cones}. Let $L_1:=\RR_{\geq0}(2,-1)$ and 
\[L_2:=
\left\{
\begin{array}{rll}
&\RR_{\geq0}(2n-2,-n+2), & n \mbox{ odd},\\
&\RR_{\geq0}(2n,-n+1), & n \mbox{ even}.
\end{array}
\right.
\]
Then $L_1$ and $L_2$ are the rays of $\Cp$.
\end{proposition}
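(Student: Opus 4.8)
The plan is to reduce the computation of the rays of $\Cp$ to the already-known computation of the rays of $\Cc$ from \cite[Propositions 2.18, 2.20]{Toh}, using Proposition \ref{mod} to transport the relevant combinatorial data from characteristic $0$ to characteristic $p$. The key point is that, by Proposition \ref{mod}, we have $\Gn^{(p)}=\Gn^{(0)}\bmod p$, and by Notation \ref{nota a3p} the support of each $g^{(p)}$ is contained in the support of the corresponding $g$, with the same distinguished leading monomial $\alpha$. Consequently the defining inequalities of $\Cp$ form a subset of those defining $\Cc$, so $\Cc\subseteq\Cp$; the work is to show no inequality is actually lost, i.e. that every $\beta\in\supp(g)\setminus\{\alpha\}$ that is relevant for cutting out $\Cc$ survives in $\supp(g^{(p)})$.

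First I would recall, following \cite[Propositions 2.18, 2.20]{Toh}, the explicit description of $\Cc$: it is a two-dimensional cone whose two rays are $L_1=\RR_{\geq0}(2,-1)$ and $L_2$ as in the statement, and these rays are detected via Lemma \ref{crit rays}(2) by exhibiting, for each candidate ray $w$, a pair $(g,\alpha)\in\Gn^{(0)}$ and a $\beta\in\supp(g)\setminus\{\alpha\}$ with $(\alpha-\beta)\cdot w=0$. Next I would verify that each such distinguished $\beta$ occurring in Toh-Yama's argument has nonzero coefficient already in $\ZZ_p$ — in other words, the monomials $x^\beta$ that realize the supporting hyperplanes of the two rays of $\Cc$ are among the "structural" terms of the basis elements $g_n$, $h_n$, $(uv-1)f$, $(u-1)h_{n-1}$, $(u^3v^4-1)g_{n-1}$, whose leading-and-boundary coefficients are $\pm1$ or $\pm4$ by the recursive construction in Lemmas \ref{gn}, \ref{hn}, \ref{groeb}. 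Since the only coefficients appearing are $\pm1,\pm2,\pm3,\pm4,\pm6$ (and the critical boundary terms carry coefficient $\pm1$ or $\pm4$), these do not vanish modulo any prime $p$ except possibly $p=2$ for the coefficient $4$; I would check directly that for $p=2$ the relevant supporting $\beta$'s are carried by terms with odd coefficient, so that the supporting hyperplanes are preserved in $\Gn^{(p)}$ as well. This gives the reverse containment, hence $\Cc=\Cp$, and in particular $\Cp$ has exactly the rays $L_1$ and $L_2$.

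The main obstacle I anticipate is precisely the bookkeeping at $p=2$: a priori the passage $\bmod\,p$ could kill a term $4x^\beta$ that defines one of the two rays, enlarging the cone; one must confirm that the ray-defining equalities $(\alpha-\beta)\cdot w=0$ from \cite{Toh} are witnessed by unit-coefficient (hence $\bmod\,p$-surviving) monomials in each basis element. A clean way to organize this is: (i) for $L_1=\RR_{\geq0}(2,-1)$, note that $(2,-1)\in\Cp$ unconditionally by Lemma \ref{crit rays}(1), and it is a ray because some $(\alpha-\beta)\cdot(2,-1)=0$ with $\beta$ in the support of $g^{(p)}$ — and here $\beta$ comes from a factor $(uv-1)$ or $(u-1)$, whose coefficients are $\pm1$; (ii) for $L_2$, trace Toh-Yama's witness through the recursion of Lemma \ref{groeb} and observe it lies in a $\pm1$-coefficient term. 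Once $\Cc=\Cp$ is established, the conclusion that $L_1,L_2$ are the rays of $\Cp$ is immediate, and this feeds into the proof that $\Cp$ is non-regular (a $2\times2$ determinant computation with the primitive ray generators) exactly as in characteristic $0$, yielding Theorem \ref{nash A3}.
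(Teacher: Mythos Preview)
Your proposal is correct and follows the same strategy as the paper: use Proposition~\ref{mod} to transport the ray-defining witness pairs $(\alpha,\beta)$ from $\Gn^{(0)}$ to $\Gn^{(p)}$, then invoke Lemma~\ref{crit rays} to conclude that $L_1$ and $L_2$ are rays of $\Cp$. Your anticipated $p=2$ obstacle does not in fact arise, because the paper (following \cite{Toh}) identifies the witnesses explicitly and their coefficients are units: for $L_1$ the element $f_n=(uv-1)^{n-1}g_1$ has $\alpha=(uv)^{n-1}u^3v^4$ and $\beta=(uv)^{n-1}u$ with coefficient~$1$, and for $L_2$ the relevant $\beta$ (namely $r_n^{(n-1)/2}$ for odd $n$, $s_{n-1}$ for even $n$) carries coefficient~$-1$, so both survive modulo every prime.
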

\begin{proof}
We start with $L_1$. By Lemma \ref{crit rays}, $(2,-1)\in\Cp$. Now consider the polynomial $f_n:=(uv-1)^{n-1}g_1\in J_n^{(0)}$ (recall that $g_1=u^3v^4+u-4uv+2$). In the proof of \cite[Proposition 2.18]{Toh} it is shown that $(f_n,(uv)^{n-1}u^3v^4)\in\Gn^{(0)}$. In addition, $(uv)^{n-1}u$ is a monomial of $f_n$ and it has 1 as coefficient. By Proposition \ref{mod}, $(f_n^{(p)},(uv)^{n-1}u^3v^4)\in\Gn^{(p)}$ and $(uv)^{n-1}u$ is in the support of $f_n^{(p)}$. Since 
$$[((n-1)(1,1)+(3,4))-((n-1)(1,1)+(1,0))]\cdot(2,-1)=0,$$ 
we conclude that $L_1$ is a ray of $\Cp$ by Lemma \ref{crit rays}. 

Now consider:
\[l_n:=
\left\{
\begin{array}{rll}
&(2n-2,-n+2), & n \mbox{ odd},\\
&(2n,-n+1), & n \mbox{ even}.
\end{array}
\right.
\]
A direct computation shows that $l_n\in\sigma$. In the proof of \cite[Proposition 2.20]{Toh} it is shown that $l_n\cdot(\alpha-\beta)\geq0$ for any $(g,\alpha)\in\Gn^{(0)}$ and any $\beta\in \supp(g)\setminus\{\alpha\}$. In particular, by Proposition \ref{mod}, the same statement holds for the elements of $\Gn^{(p)}$. It follows that $l_n\in\Cp$. To prove that $l_n$ defines a ray of $\Cp$ we proceed as in the previous paragraph.  There exists an element $(g,\alpha)\in\Gn^{(0)}$ such that:
$\alpha=q_n^{\frac{n-1}{2}}  \; \&  \; r_n^{\frac{n-1}{2}}\in \supp(g)$ if $n$ is odd, and 
$\alpha=p_n \; \& \;  s_{n-1}\in \supp(g)$ if $n$  even.
In addition, in the same work it is shown that both monomials $r_n^{\frac{n-1}{2}}\in \supp(g)$ ($n$ odd) and $s_{n-1}\in \supp(g)$ ($n$ even) have -1 as coefficient. As before, it follows that $(g^{(p)},\alpha)\in\Gn^{(p)}$, $r_n^{\frac{n-1}{2}}\in \supp(g^{(p)})\setminus\{\alpha\}$ if $n$ is odd and $s_{n-1}\in \supp(g^{(p)})\setminus\{\alpha\}$ if $n$  is even. Finally, $l_n\cdot(q_n^{\frac{n-1}{2}}-r_n^{\frac{n-1}{2}})=0$ and $l_n\cdot(p_n-s_{n-1})=0$ \cite[Lemma 2.10(7)]{Toh}. By Lemma \ref{crit rays}, we conclude that $L_2$ is a ray of $\Cp$.
\end{proof}

All previous results have as a consequence:

\begin{theorem}\label{ThmNotResA3}
Let $n\in\ZZ^+$. Then $\Cp$ is a non-regular cone of $\GrFan(J_n^{(p)})$. In particular, $\Nash_n(X)$ is a singular variety, where
$X=\V(xy-z^4)\subseteq\KK^3$.
\end{theorem}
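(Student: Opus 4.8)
The plan is to reduce the statement to the combinatorial fact that $\Cc$ is non-regular, which is exactly Theorem \ref{main Toh}, via the identification $\Cc = \Cp$. First I would observe that the inclusion $\Cc \subseteq \Cp$ is immediate: by Proposition \ref{mod}, $\Gn^{(p)} = \Gn^{(0)} \bmod p$, so for each $(g,\alpha) \in \Gn^{(0)}$ the support of $g^{(p)}$ is a subset of the support of $g$, hence every linear inequality $(\alpha - \beta)\cdot w \geq 0$ defining $\Cp$ is among those defining $\Cc$. However, as noted in the excerpt, this inclusion alone does not suffice to transfer non-regularity, since a larger cone could in principle become regular. So the real work is the reverse inclusion $\Cp \subseteq \Cc$, equivalently $\Cc = \Cp$.

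Next I would establish $\Cc = \Cp$ using Proposition \ref{PropToh218220} together with the known description of $\Cc$. Both $\Cc$ and $\Cp$ are two-dimensional cones in $\sigma \subseteq \RR^2$ (they are maximal cones of the respective Gröbner fans by the cited Proposition), so each is determined by its two extremal rays. Toh-Yama's Propositions 2.18 and 2.20 show that $L_1 = \RR_{\geq 0}(2,-1)$ and $L_2$ (as defined in Proposition \ref{PropToh218220}) are the rays of $\Cc$; Proposition \ref{PropToh218220} shows the same rays $L_1$, $L_2$ are the rays of $\Cp$. Since a two-dimensional cone in $\RR^2$ is the convex hull of its two rays, it follows that $\Cc = \RR_{\geq 0}L_1 + \RR_{\geq 0}L_2 = \Cp$.

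Finally I would conclude: by Theorem \ref{main Toh}, $\Cc$ is a non-regular cone of $\GrFan(J_n^{(0)})$, i.e. its primitive ray generators do not form part of a $\ZZ$-basis of $\ZZ^2$ (equivalently, the determinant of the matrix of primitive ray generators is not $\pm 1$). Non-regularity is a property of the cone as a subset of $\RR^2$ together with the lattice $\ZZ^2$, so it depends only on $\Cc$ as a cone and not on the characteristic. Since $\Cc = \Cp$, the cone $\Cp$ is also non-regular. As $\Cp$ is a maximal cone of $\GrFan(J_n^{(p)})$, and $\GrFan(J_n^{(p)}) = \Sigma$ is the fan of $\Nn$ by Theorem \ref{t. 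Nash = GF}, the normalized Nash blowup $\Nn$ has a singular affine chart, hence $\Nn$ is singular; since $\pi_n \circ \eta$ is the normalization composed with $\pi_n$, this forces $\Nash_n(X)$ to be singular as well. This holds for every $n \in \ZZ^+$, which is the claim.

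The main obstacle, and the only place requiring genuine argument rather than bookkeeping, is the equality $\Cc = \Cp$ — specifically verifying that Proposition \ref{PropToh218220} genuinely pins down \emph{both} rays of $\Cp$ and that no additional inequalities coming from $\Gn^{(0)}$ could cut $\Cc$ strictly smaller than $\Cp$. The key technical point that makes this work is that in Proposition \ref{PropToh218220} the monomials witnessing the ray-defining equalities (the coefficient-$1$ monomial $(uv)^{n-1}u$ for $L_1$, and the coefficient-$(-1)$ monomials $r_n^{(n-1)/2}$ or $s_{n-1}$ for $L_2$) survive reduction mod $p$ because their coefficients are $\pm 1$; combined with Lemma \ref{crit rays}(2), this guarantees $L_1$ and $L_2$ are rays of $\Cp$, and since a full-dimensional cone in $\RR^2$ has exactly two rays, these must be \emph{all} of them, forcing $\Cp = \RR_{\geq 0}L_1 + \RR_{\geq 0}L_2$, which is precisely $\Cc$.
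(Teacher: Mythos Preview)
Your proof is correct and follows essentially the same approach as the paper: deduce $\Cc=\Cp$ by comparing Toh-Yama's description of the rays of $\Cc$ with Proposition~\ref{PropToh218220}, then invoke Theorem~\ref{main Toh}. Your write-up simply unpacks in more detail what the paper compresses into two sentences, including the passage from non-regularity of a cone in the fan of $\Nn$ to singularity of $\Nash_n(X)$ itself.
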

\begin{proof}
Combining work of R. Toh-Yama  \cite[Propositions 2.18,2.20]{Toh}  and  Proposition \ref{PropToh218220}  we deduce that $\Cc=\Cp$. By Theorem \ref{main Toh}, we conclude that $\Cp$ is a non-regular cone.
\end{proof}


\bibliographystyle{alpha}
\bibliography{References}

\end{document}